\newcommand*{\mailto}[1]{\href{mailto:#1}{\nolinkurl{#1}}}
\newcommand{\beq}{\begin{equation}}
	\newcommand{\eeq}{\end{equation}}
\newcommand{\ba}{\begin{align}}
	\newcommand{\ea}{\end{align}}
\renewcommand{\Re}{\text{\rm Re}}
\renewcommand{\Im}{\text{\rm Im}}
\numberwithin{equation}{section}
\newtheorem{theorem}{Theorem}[section]
\newtheorem{lemma}[theorem]{Lemma}
\newtheorem{ip}[theorem]{Inverse Problem}
\theoremstyle{definition}
\newtheorem{proposition}[theorem]{Proposition}
\newtheorem{remark}{Remark}[section]
\newtheorem{assume}[theorem]{Assumption}
\begin{document}

	\title[A class of higher order inverse spectral problems]
	{A class of higher order inverse spectral problems}
	
	\author[A.~W.~Guan]{AI-WEI GUAN}
	\address{Department of Mathematics, School of Mathematics and Statistics, Nanjing University of
		Science and Technology, Nanjing, 210094, Jiangsu, People's
		Republic of China}
	\email{\mailto{guan.ivy@njust.edu.cn}}
	
	\author[C.~F.~Yang]{CHUAN-FU Yang}
	\address{Department of Mathematics, School of Mathematics and Statistics, Nanjing University of
		Science and Technology, Nanjing, 210094, Jiangsu, People's
		Republic of China}
	\email{\mailto{chuanfuyang@njust.edu.cn}}
	
	\author[N.~P.~Bondarenko]{NATALIA P. BONDARENKO}
	\address{S.M. Nikolskii Mathematical Institute, Peoples' Friendship University of Russia (RUDN University), 6 Miklukho-Maklaya Street, Moscow, 117198, Russian Federation}
	\email{\mailto{bondarenkonp@sgu.ru}}

	\subjclass[2020]{34A55, 34B24, 47E05}
	\keywords{Higher-order differential operators,  Distribution coefficients, Multiple spectra, Inverse spectral problem,  Uniqueness.}
	\date{\today}
	
	\begin{abstract}
		{
			In this paper, we consider the recovery of third-order differential operators from two spectra, as well as fourth-order or fifth-order differential operators from three spectra, where these differential operators are endowed with complex-valued distributional coefficients. For the case of multiple spectra, we first establish the relationship between spectra and the Weyl-Yurko matrix. Secondly, we prove the uniqueness theorem for the solution of the inverse problems. Our approach allows us to obtain results for the general case of complex-valued distributional coefficients.
		}
	\end{abstract}
	
	\maketitle
	
\section{Introduction}
This paper deals with inverse spectral problems for the $n$-th order differential operators with distribution coefficients for $n=3,4,5$. Consider the operators generated by the following differential expression for $n=2m$:
	\begin{align}\label{1}
	\ell_{2m}(y) := y^{(2m)} + \sum_{k = 1}^{m}(-1)^{k} (\tau_{k}^{(k)}(x) y^{(m-k)})^{(m-k)},
\end{align}
where $$\tau_{1},\dots,\tau_{m}\in L_2[0,1],$$
and for $n=2m+1$:
	\begin{align}\label{2}
	\ell_{2m+1}(y)\! := & y^{(2m+1)} + \!\sum_{k = 0}^{m-1}(-1)^{k}\bigg( (\tau_{k}^{(k)}(x) y^{(m-k-1)})^{(m-k)}\!+\!(\tau_{k}^{(k)}(x) y^{(m-k)})^{(m-k-1)}\bigg),
\end{align}
where $$\tau_{0},\dots,\tau_{m-1}\in L_1[0,1].$$
The derivatives in (\ref{1}) and (\ref{2})  are to be understood in the sense of distributions.

Inverse problems of spectral analysis are concerned with recovering the properties of a signal or system from its observed spectrum. Such problems arise in various fields of science and engineering including geophysics, meteorology, physics, electronics, and other applications. Inverse problem theory for differential operators with regular coefficients has yielded numerous results. In the second-order case, the differential equation
\[ -y'' + q(x) y = \lambda y, \quad x \in (0, 1), \]
has been extensively studied, with comprehensive results in \cite{FY01, Krav20, Lev87, Mar86}. In 1946, Borg proved in \cite{Bor} that the potential of the Sturm-Liouville equation is uniquely determined by two spectra corresponding to different sets of boundary conditions. Subsequently, Gelfand and Levitan provided a constructive method for solving the inverse Sturm-Liouville problem \cite{GL51}. However, higher-order differential operators are inherently more difficult to investigate than second-order ones. 

Various issues of inverse spectral theory for the third-order and the fourth-order differential operators were investigated by Barcilon \cite{Bar1, Bar2}, McLaughlin \cite{Mc1, Mc2, Mc3}, McKean \cite{McK81}, Gladwell \cite{Gla}, 
Papanicolaou \cite{Pap04}, Badanin and Korotyaev \cite{Bad}, Perera and B\"ockmann \cite{PB20} and other scholars.
Inverse problem theory for arbitrary $n$-th order differential equations of form
\begin{equation} \label{ho}
y^{(n)} + \sum_{s = 0}^{n-2} p_s(x) y^{(s)} = \lambda y, \quad n > 2,
\end{equation}
on a finite interval and on the half-line has been constructed by Yurko (see, e.g., \cite{Yur, Yur1, Yur2}). In particular, he introduced the so-called Weyl-Yurko matrix, which uniquely specifies the coefficients $\{ p_s\}_{s = 0}^{n-2}$ of equation \eqref{ho} independently on the behavior of the corresponding spectra. Furthermore, Yurko has developed the method of spectral mappings for reconstruction and for investigation of existence for solution of the inverse problem. Inverse scattering on the line for higher-order differential operators requires a different approach (see \cite{Ba}).

Following significant progress made in the case of sufficiently smooth potential functions, research in this field has gradually shifted from regular coefficients to distributions (i.e. generalized functions). For the Sturm-Liouville operators, Hryniv and Mykutyuk generalized the basic results of inverse problem theory to distributional
potentials of class $W_2^{-1}(0,1)$ in \cite{Hry1,Hry2,Hry3}.  Afterwards, Mirzoev and Shkalikov \cite{Mir1,Mir2} developed a regularization approach for higher-order differential operators with distribution coefficients. Consequently, spectral theory for operators of this kind has been actively developed in many directions (see, e.g., \cite{KMS23} and references therein).
In particular, Bondarenko has extensively studied inverse spectral problems for higher-order differential operators with distribution coefficients. In \cite{Bon2, Bon3}, uniqueness theorems have been proved for such operators on a finite interval and on the half-line. Meanwhile, a constructive approach for recovering the coefficients of a differential operator from spectral data (eigenvalues of $(n-1)$ boundary value problems and corresponding weight numbers) has been proposed in \cite{Bon1}. This method allowed Bondarenko to obtain necessary and sufficient conditions for the inverse problem solvability in \cite{Bon7}.

In this paper, we focus on a specific class of operators generated by differential expressions \eqref{1} and \eqref{2} with $[\frac{n}{2}]$ unknown coefficients. Naturally, such operators should require less spectral data for their reconstruction than \eqref{ho} with $(n-1)$ coefficients. In particular, Barcilon \cite{Bar1} considered the uniqueness of recovering the coefficients $p$ and $q$ of the differential equation
\begin{equation}\label{4444}
y^{(4)} - (p y')' + q y = \lambda y, \quad x \in [0,1],
\end{equation}
from three spectra corresponding to various boundary conditions. However, the proof of uniqueness given in \cite{Bar1} is wrong. A correct proof has been recently provided by Guan et al \cite{Guan} for the case of distribution coefficients. A related inverse problem statement was considered by McLaughlin \cite{Mc1, Mc2}. She studied the recovery of $p$ and $q$ from a single spectrum and two sequences of norming constants and obtained solvability results by requiring the existence of transformation operator. Recently, the uniqueness of solution for McLaughlin's problem has been proved by Bondarenko \cite{Bon9}. Nevertheless, the mentioned studies are concerned with simple eigenvalues.
When a spectrum is multiple, the investigation of a differential operator becomes more challenging. The case of multiple eigenvalues was considered in \cite{But1,But2,tac} for second-order non-self-adjoint Sturm Liouville operators.

In this paper, our primary objective is to prove the uniqueness of recovering the differential expressions \eqref{1} and \eqref{2} from $([\frac{n}{2}] + 1)$ spectra for $n = 3, 4, 5$. We consider the general case of complex-valued distribution coefficients and multiple eigenvalues. The only imposed restriction is the separation condition (see Assumption~\ref{def:G}) that previously appeared in the studies of Leibenzon \cite{Leib66} and Yurko \cite{Yur}. Note that the number of required spectra in our paper is less than in the study of Baranova \cite{Bara}, which deals with the recovery of $n$-th order differential operators from $(4n-6)$ spectra. Furthermore, the choice of the spectra in this paper differs from \cite{Bara}.

Let us briefly describe the methods of our study.
The differential expressions \eqref{1} and \eqref{2} are understood in terms of the approach by Mirzoev and Shkalikov \cite{Mir1, Mir2} based on quasi-derivatives.
Then, following the idea of \cite{Guan}, we
investigate the relationship of the spectra with the Weyl-Yurko
matrix. Next, we show that not the full Weyl-Yurko matrix but only its several elements are sufficient to prove the uniqueness theorem by using the method of spectral mappings under the separation condition. Relying on this result, we prove the uniqueness by several spectra. Thus, our approach allows us to understand inverse problems for differential operators of the specific form \eqref{1} and \eqref{2} in the framework of the general inverse problem theory \cite{Yur, Bon1} for higher orders.
It is worth noting that the case $n\geq6$ requires a separate investigation.

The paper is organized as follows. In Section \ref{main results}, we describe the regularization of the differential expressions \eqref{1} and \eqref{2}, introduce the necessary concepts, and present the main results (Theorems~\ref{thm:main1}, \ref{thm:main}, and \ref{thm:main2}). Section \ref{sec:prelim} contains some preliminaries and auxiliary lemmas. In Section~\ref{sec:Weyl}, we prove the uniqueness of recovering the operator from the Weyl functions. That result plays an auxiliary role in this paper but also has a separate significance.
In Section \ref{proof0}, the uniqueness theorems by several spectra are proved for differential operators generated by \eqref{1} and \eqref{2}.

	\section{Main Results}\label{main results}
	
	Let us begin with the regularization of the differential expressions \eqref{1} and \eqref{2}. Our approach is based on the method developed by Mirzoev and Shkalikov \cite{Mir1, Mir2}, which involves constructing the matrix function $F(x)=[f_{k,j}(x)]_{k,j=1}^{n}$ that is associated with the differential expression \eqref{1} or \eqref{2} using a specific rule. This rule depends on the order of the differential equation and the classes of coefficients. 
	
	We assume that the differential equation
	\begin{equation} \label{eqv}
		l_n(y) = \lambda y, \quad x \in (0, 1),
	\end{equation}
	where $\lambda$ is the spectral parameter, can be transformed into the first-order system
	\begin{equation} \label{sys}
		Y'(x) = (F(x) + \Lambda) Y(x), \quad x \in (0, 1),
	\end{equation}
	where $Y(x)$ is a column vector function of size $n$, $\Lambda$ is the $n \times n$-matrix whose entry at position $(n,1)$ is $\lambda$ and all other entries are zero, and $F(x) = [f_{k,j}(x)]_{k,j = 1}^n$ is a matrix function with the following properties:
\begin{equation*} \label{propF}
	\begin{array}{llll}
		f_{k,j}(x) \equiv 0, \quad & k + 1 < j; \qquad
		& f_{k,k+1}(x) \equiv 1, \quad & k = \overline{1,n-1}; \\
		f_{k,k} \in L_2(0,1), \quad & k = \overline{1,n}; \qquad
		& f_{k,j} \in L_1(0,1), \quad & k > j, \quad \mbox{trace}(F(x)) = 0.
	\end{array}
\end{equation*} The class of $n \times n$ matrix functions that satisfy these properties is denoted by $\mathfrak F_n$.
	
	Given any $F \in \mathfrak F_n$, we can define the quasi-derivatives as follows:
	
	\begin{equation} \label{quasi}
		y^{[0]} := y, \quad y^{[k]} = (y^{[k-1]})' - \sum_{j = 1}^k f_{k,j} y^{[j-1]}, \quad k = \overline{1,n},
	\end{equation}
and the domain
$$
\mathcal D_F = \{ y \colon y^{[k]} \in AC[0,1], \, k = \overline{0, n-1} \}.
$$
	
A matrix function $F(x) \in \mathfrak F_n$ is called \textit{an associated matrix} of the differential expression $\ell_n(y)$ if $\ell_n(y) = y^{[n]}$ for any $y \in \mathcal D_F$.  A function $y$ is called \textit{a solution} of equation \eqref{eqv} if $y \in \mathcal D_F$ and $y^{[n]} = \lambda y$ a.e. on $(0,1)$.
	
	Let us consider a differential expression of the form \eqref{1} or \eqref{2} and an associated matrix $F(x) = [f_{k,j}]_{k,j = 1}^n$. Using the quasi-derivatives \eqref{quasi}, we define the linear forms as follows:
	
	\begin{align} \label{defU}
		U_{s}(y) & := y^{[p_{s,0}]}(0) + \sum_{j = 1}^{p_{s,0}} u_{s,j,0} y^{[j-1]}(0), \quad s = \overline{1,n},\\  \label{defV}
	V_{s}(y) & := y^{[p_{s,1}]}(1) + \sum_{j = 1}^{p_{s,1}} u_{s,j,1} y^{[j-1]}(1), \quad s = \overline{1,n},
    \end{align}
	
	Here, $p_{s,a}$ takes values from the set $\{ 0, \ldots, n-1 \}$ such that $p_{s,a} \ne p_{k,a}$ for $s \ne k$, and $u_{s,j,a}$ are complex numbers, $a=0,1$. We can also introduce the matrices $U=[u_{s,j,0}]_{s,j = 1}^n$, $V=[u_{s,j,1}]_{s,j = 1}^n$, where $u_{s,j,0} := \delta_{j,p_{s,0} + 1}$, $u_{s,j,1} := \delta_{j,p_{s,1} + 1}$. when $j > p_{s,0}$, $j > p_{s,1}$. Here and below, $\delta_{j,k}$ denotes the Kronecker delta. The problem $\mathcal L$ is defined as the triple $(F(x), U, V)$.
	
	Denote by $\{ C_k(x,\lambda) \}_{k = 1}^n$ the solutions of equation \eqref{eqv} satisfying the initial conditions
	$$
	U_{s} (C_k) = \delta_{s,k}, \quad s = \overline{1,n}.
	$$
	
	Put $\Delta_{k,k}(\lambda) := \det[V_{s}(C_r)]_{s,r = k + 1}^n$ and let $\Delta_{j,k}(\lambda)$ be obtained from $\Delta_{k,k}(\lambda)$ by the replacement of $C_j$ by $C_k$. Clearly, the functions $C_k^{[s-1]}(x, \lambda)$ for $k, s = \overline{1,n}$ and each fixed $x \in [0,1]$ as well as $\Delta_{j,k}(\lambda)$ for $1 \le k \le j \le n$ are entire analytic in $\lambda$.
	
		Let $\Lambda_{jk}\;(1\leq k\leq j\leq n)$ be the set of zeros (conting with multiplicities) of the entire function $\Delta_{j,k}(\lambda)$.

\begin{lemma}[\cite{Yur,Bon2}]\label{S}
	For $1\leq k\leq j\leq n$, the multiset $\Lambda_{jk}$ coincides with the eigenvalues (counting with multiplicities) of the boundary value problem $S_{jk}$ for the differential equation \eqref{eqv} under the conditions
	$$
	U_{\xi}(y)=V_{\eta}(y)=0,\quad \xi=\overline{1,j-1},k,\quad\eta=\overline{j+1,n}.
	$$
	Moreover, the multiset $\Lambda_{jk}$ uniquely determines the corresponding characteristic function $\Delta_{jk}(\lambda)$.
\end{lemma}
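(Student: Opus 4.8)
The plan is to establish the two assertions separately. For the coincidence of the multiset $\Lambda_{jk}$ with the spectrum of $S_{jk}$, I would use that $\{C_k(\cdot,\lambda)\}_{k=1}^n$ is a fundamental system of \eqref{eqv}, so any solution reads $y = \sum_{r=1}^n a_r C_r(\cdot,\lambda)$. Since $U_s(C_r) = \delta_{s,r}$, the forms $U_\xi$ simply read off coefficients, $U_\xi(y) = a_\xi$; hence the conditions $U_\xi(y)=0$ of $S_{jk}$ are equivalent to the vanishing of the corresponding $a_\xi$ and confine $y$ to the span of the solutions that index the columns of $\Delta_{j,k}$, namely $C_{k+1},\dots,C_n$ with $C_j$ replaced by $C_k$. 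Substituting into the remaining conditions $V_\eta(y)=0$ then produces a homogeneous linear system in the surviving coefficients whose coefficient matrix is exactly the one obtained from $[V_s(C_r)]_{s,r=k+1}^n$ by the replacement of $C_j$ by $C_k$, i.e.\ the matrix defining $\Delta_{j,k}(\lambda)$. Thus a nontrivial eigenfunction exists iff $\Delta_{j,k}(\lambda)=0$, which matches the eigenvalues of $S_{jk}$ with the zeros of $\Delta_{j,k}$ as sets.

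The harder and, I expect, decisive step is to match \emph{multiplicities} rather than just sets. I would define the multiplicity of an eigenvalue $\lambda_0$ of $S_{jk}$ as its algebraic multiplicity, that is, the total length of the chains of eigen- and associated functions, and compare it with the order of vanishing of $\Delta_{j,k}$ at $\lambda_0$. Because the coefficient matrix above depends analytically on $\lambda$ and $\lambda$ enters only the equation and not the boundary forms, the order of the zero of its determinant should equal the total length of the Jordan chains of the corresponding finite-dimensional analytic family; concretely, I would differentiate the system in $\lambda$ and show that $\partial_\lambda^p \Delta_{j,k}(\lambda_0)=0$ for $p<m$ is equivalent to the existence of a chain of generalized eigenfunctions of $S_{jk}$ of length $m$. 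This is precisely the bookkeeping available from \cite{Yur, Bon2}, so I would either invoke their multiplicity count directly or reproduce the differentiation argument, the delicate point being the careful correspondence between derivatives of solutions in $\lambda$ and associated functions in the distributional (quasi-derivative) setting.

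For the last assertion I would use growth estimates. From the asymptotics of $C_r(x,\lambda)$ and its quasi-derivatives for the $n$-th order equation, every entry $V_s(C_r)$, and therefore $\Delta_{j,k}(\lambda)$, grows at most like $\exp(c|\lambda|^{1/n})$, so the entire function $\Delta_{j,k}$ has order $1/n \le 1/3 < 1$. By Hadamard's factorization theorem, two entire functions of order less than one with the same zeros differ only by a multiplicative constant, so $\Lambda_{jk}$ determines $\Delta_{j,k}$ up to a constant; that constant is then pinned down by the leading coefficient in the asymptotic expansion of $\Delta_{j,k}$, which is governed by the free equation $y^{[n]}=\lambda y$ and hence is known a priori. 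This yields the claimed uniqueness, the only care needed being to record the normalization that fixes the constant.
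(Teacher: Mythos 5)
The paper does not actually prove this lemma --- it is quoted from \cite{Yur,Bon2} --- so there is no in-text argument to compare against; your outline is a faithful reconstruction of the standard proof in those references (fundamental system $\{C_r\}$, reduction to the vanishing of the determinant, identification of the order of the zero with the algebraic multiplicity, and Hadamard factorization for an entire function of order $1/n<1$ with the constant fixed by the coefficient-free leading asymptotics of Proposition~\ref{prof4}). Two points deserve attention, however. First, your reduction step asserts that the conditions $U_{\xi}(y)=0$ of $S_{jk}$ confine $y$ to the span of the columns of $\Delta_{j,k}$, i.e.\ to $\mathrm{span}\{C_k, C_{k+1},\dots,C_n\}\setminus\{C_j\}$. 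With the index ranges as printed in the statement ($\xi=\overline{1,j-1},k$, $\eta=\overline{j+1,n}$) this is false: for $k<j$ the condition $\xi=k$ is absorbed into $\xi=\overline{1,j-1}$, one is left with only $n-1$ conditions in total, and the surviving span is $\mathrm{span}\{C_j,\dots,C_n\}$, which is not the column span of $\Delta_{j,k}$. The conditions that actually correspond to $\Delta_{j,k}$ (and for which your argument, the count of $n$ conditions, and the use of the lemma in the proofs of Theorems~\ref{thm:main1} and \ref{thm:main} are all consistent) are $U_{\xi}(y)=0$, $\xi=\overline{1,k-1},j$, and $V_{\eta}(y)=0$, $\eta=\overline{k+1,n}$; you have silently proved the corrected statement, and you should say so explicitly rather than let the mismatch pass.

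Second, in the multiplicity step you propose to show that $\partial_\lambda^p\Delta_{j,k}(\lambda_0)=0$ for $p<m$ is ``equivalent to the existence of a chain of generalized eigenfunctions of length $m$.'' That equivalence is wrong as stated: if $\lambda_0$ has geometric multiplicity greater than one (which the paper explicitly allows), the order of the zero of $\Delta_{j,k}$ equals the \emph{sum} of the lengths of the chains in a canonical system of eigen- and associated functions, not the length of a single longest chain; e.g.\ two independent eigenfunctions with no associated functions give a double zero but no chain of length two. Your own definition of multiplicity in the preceding sentence (total length of the chains) is the correct one, so this is a slip of formulation rather than of strategy, but the differentiation argument must be organized around the full canonical system (as in \cite{Yur}) rather than around one chain. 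The Hadamard part is fine as written.
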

	
	Let us impose the following assumption.
	
	\begin{assume} \label{def:G}
		Suppose that $\Delta_{m,m}$  and $\Delta_{m+1,m+1}$ do not have common zeros for $1 \leq m \leq n-1$.
	\end{assume}
	
	We define the class $\mathcal{W}$ as the set of all the problems $\mathcal L$ satisfying  Assumption \ref{def:G}. Throughout this paper, we assume that $\mathcal L \in \mathcal{W}$.
	
	In the classical Sturm-Liouville problems, it is known that the potential can be uniquely recovered from two spectra (see \cite{Bor}).
	This leads to the natural question: for higher-order differential operators, do we have a similar conclusion?
	
	\begin{ip} 
		For the $n$th-order differential operators \eqref{1} and \eqref{2}, is it possible to recover $m=[\frac{n}{2}]$ potentials from $(m+1)$ spectra?
	\end{ip}

In this paper, we provide a positive answer to this question for third-, fourth-, and fifth-order differential operators. In order to formulate our uniqueness theorems, we along with $\mathcal L$ consider another problem $\tilde{\mathcal{L}}=(\tilde{F}(x), {U}, {V})$, where $\tilde{F}(x)$ is a matrix function with the same structure as ${F}(x)$ but with different coefficients. We agree that, if a symbol $\xi$ denotes an object related to $\mathcal{L}$, then the symbol $\tilde{\xi}$ with tilde will denote the analogous object related to $\tilde{\mathcal{L}}$. We assume that $\mathcal{L}=(F(x),U , V)\in \mathcal{W}$ and $\tilde{\mathcal{L}}=(\tilde{F}(x), U ,V)\in \mathcal{W}$.
	
	For definiteness, we introduce the following linear forms:
	\begin{equation} \label{UV_simp}
			U_{s}(y)=y^{[n-s]}(0), \quad
			V_{s}(y)=y^{[n-s]}(1).  \quad s = \overline{1,n}.
	\end{equation}
	
	First, let us consider the third-order differential equation
	\begin{equation}\label{l_{3}}
		l_{3}(y) := y^{(3)}+(py)'+py' = \lambda y, \quad x\in (0,1),
	\end{equation}
    where $p \in L_1(0,1)$. The associated matrix has the form
    \begin{equation}\label{matr3}
	F(x)=\begin{pmatrix} 0 & 1 & 0 \\ -p & 0 & 1  \\ 0 & -p & 0\end{pmatrix},
    \end{equation}
        
	By using the matrix \eqref{matr3} and the corresponding quasi-derivatives \eqref{quasi}, we get the relation $l_{3}(y)=y^{[3]}$.

For $i\in \{1, 2\}$, denote by $\mathfrak{S}_{i}$ the spectrum of the boundary value problem for equation (\ref{l_{3}}) with the boundary conditions
	\begin{equation*}
		U_{i}(y)=0, \quad V_{2}(y)=V_{3}(y)=0.
	\end{equation*}

 First of our main results is formulated as follows:

\begin{theorem}\label{thm:main1}
	If $\mathfrak{S}_{1}=\tilde{\mathfrak{S}}_{1}$, $\mathfrak{S}_{2}=\tilde{\mathfrak{S}}_{2}$, then $p=\tilde{p}$ in $L_1(0,1)$. Consequently, the specification of the two spectra $\mathfrak{S}_{1}$, $\mathfrak{S}_{2}$ uniquely determines the coefficient $p$ of the differential expression $l_{3}(y)$.
\end{theorem}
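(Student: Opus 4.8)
The plan is to convert the two-spectra hypothesis into the equality of a single Weyl function and then to recover $p$ from it by the method of spectral mappings, using the special structure of the associated matrix \eqref{matr3}. First I would identify each spectrum with the zeros of one of the entire functions $\Delta_{j,k}(\lambda)$. Expanding a solution of \eqref{l_{3}} as $y=c_1C_1+c_2C_2+c_3C_3$ and using $U_s(C_k)=\delta_{s,k}$ together with \eqref{UV_simp}, the condition $U_1(y)=0$ means $c_1=0$, so $\mathfrak{S}_1$ consists exactly of those $\lambda$ for which $y=c_2C_2+c_3C_3$ satisfies $V_2(y)=V_3(y)=0$ nontrivially, i.e. $\det[V_s(C_r)]_{s,r=2}^3=\Delta_{1,1}(\lambda)=0$. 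Likewise $U_2(y)=0$ means $c_2=0$, and $\mathfrak{S}_2$ is the zero set of the determinant obtained from $\Delta_{1,1}$ by replacing $C_2$ with $C_1$, namely $\Delta_{2,1}(\lambda)$. Counting multiplicities, $\mathfrak{S}_1=\Lambda_{11}$ and $\mathfrak{S}_2=\Lambda_{21}$, so by the uniqueness part of Lemma \ref{S} the hypotheses $\mathfrak{S}_1=\tilde{\mathfrak{S}}_1$, $\mathfrak{S}_2=\tilde{\mathfrak{S}}_2$ give $\Delta_{1,1}\equiv\tilde\Delta_{1,1}$ and $\Delta_{2,1}\equiv\tilde\Delta_{2,1}$.

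Next I would pass to the Weyl function. Setting $\mathcal M_{2,1}(\lambda):=\Delta_{2,1}(\lambda)/\Delta_{1,1}(\lambda)$, the $(2,1)$-entry of the Weyl--Yurko matrix, Assumption \ref{def:G} supplies the separation of zeros that gives this ratio the analytic behaviour of a genuine Weyl function, and the previous step at once yields $\mathcal M_{2,1}\equiv\tilde{\mathcal M}_{2,1}$.

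The central task is then to recover $p$ from $\mathcal M_{2,1}$. Here the special form of $l_3$ is decisive: the associated matrix \eqref{matr3} is persymmetric, $JF^{\mathsf{T}}J=F$ with $J$ the $3\times3$ anti-diagonal permutation matrix, precisely because its two subdiagonal entries coincide ($f_{2,1}=f_{3,2}=-p$)---a symmetry absent from the general third-order equation \eqref{ho} with its two independent coefficients. This persymmetry should reduce the amount of Weyl data needed for uniqueness from the $n-1=2$ functions of the general theory to the single function $\mathcal M_{2,1}$. Granting that reduction, I would invoke the uniqueness-from-Weyl-functions result of Section \ref{sec:Weyl}, established by the method of spectral mappings: one studies the difference $\hat{\mathcal M}_{2,1}=\mathcal M_{2,1}-\tilde{\mathcal M}_{2,1}$, expresses the difference of the two operators through contour integrals of $\hat{\mathcal M}_{2,1}$ against the Weyl solutions, and uses Assumption \ref{def:G} to close the contour argument; since $\hat{\mathcal M}_{2,1}\equiv0$, this forces $F\equiv\tilde F$, hence $p=\tilde p$ in $L_1(0,1)$.

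The main obstacle is this last step. Steps one and two are essentially bookkeeping built on Lemma \ref{S}, but Step three requires genuine analytic work: one must verify that the persymmetry of \eqref{matr3} really does lower the required Weyl data to $\mathcal M_{2,1}$ alone, and that the spectral-mapping machinery closes for this reduced data under the separation condition. This is exactly the content of the auxiliary theorem of Section \ref{sec:Weyl}; the argument in Section \ref{proof0} then amounts to feeding the identifications of Step one into that theorem.
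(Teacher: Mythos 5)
Your proposal is correct and follows essentially the same route as the paper: identify $\mathfrak{S}_1,\mathfrak{S}_2$ with the zero multisets of $\Delta_{1,1},\Delta_{2,1}$ to get $m_{21}=\tilde m_{21}$, use the symmetry $F^{\star}=F$ of the persymmetric matrix \eqref{matr3} to deduce $m_{32}(\lambda)=m_{21}(-\lambda)$, and then invoke the uniqueness-by-Weyl-functions theorem. The only small correction is that the reduction from two Weyl functions to one is carried out in Lemma~\ref{real0} (relation \eqref{3.}, via Proposition~\ref{lem:M}), not inside Theorem~\ref{mfun} of Section~\ref{sec:Weyl}, which still takes both $m_{21}$ and $m_{32}$ as input.
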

	
	Next, consider the fourth-order differential equation
	\begin{equation}\label{l_{4}}
		l_{4}(y):= y^{(4)}-(py')'+qy = \lambda y, \quad x\in (0,1),
	\end{equation}
where $p \in W_{2}^{-1}(0, 1)$, $q \in W_{2}^{-2}(0, 1)$. 
The associated matrix has the form
\begin{equation} \label{matr4}
	F(x)=\begin{pmatrix} 0 & 1 & 0 & 0\\ -\tau_{2} & \tau_{1} & 1 & 0 \\ \tau_{1}\tau_{2} & -\tau_{1}^{2}+2\tau_{2} & -\tau_{1} & 1 \\ \tau_{2}^{2} & -\tau_{1}\tau_{2} & -\tau_{2} & 0\end{pmatrix},
\end{equation}
where $\tau_{1}'=p$, $\tau_{2}''=q$, $\tau_1, \tau_2 \in L_2(0,1)$.
By the same way with the third-order case, the relation $l_{4}(y)=y^{[4]}$ holds. 
	
For $(i,j)\in \{(1,2), (1,3), (2,3)\}$, denote by $\mathfrak{S}_{ij}$ the spectrum of the boundary value problem for equation (\ref{l_{4}}) under the boundary conditions
	\begin{equation*}
		U_{i}(y)=U_{j}(y)=0, \quad V_{3}(y)=V_{4}(y)=0.
	\end{equation*}
	
	For the fourth-order case, we obtain the following result:
	
	\begin{theorem}\label{thm:main}
		If $\mathfrak{S}_{12}=\tilde{\mathfrak{S}}_{12}$, $\mathfrak{S}_{13}=\tilde{\mathfrak{S}}_{13}$, and $\mathfrak{S}_{23}=\tilde{\mathfrak{S}}_{23}$, then $p=\tilde{p}$ in $W_2^{-1}(0,1)$ and $q=\tilde{q}$ in $W_2^{-2}(0,1)$. Consequently, the specification of the three spectra $\mathfrak{S}_{12}$, $\mathfrak{S}_{13}$, and $\mathfrak{S}_{23}$ uniquely determines the coefficients $p$ and $q$ of the differential expression $l_{4}(y)$.
	\end{theorem}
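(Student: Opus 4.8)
The plan is to follow the strategy used for Theorem~\ref{thm:main1}: convert the three prescribed spectra into a small collection of entries (and combinations of entries) of the Weyl-Yurko matrix, and then invoke the uniqueness-from-Weyl-functions result established in Section~\ref{sec:Weyl}. Since the operator \eqref{l_{4}} depends only on the two coefficients $\tau_1,\tau_2\in L_2(0,1)$ (with $\tau_1'=p$, $\tau_2''=q$), it should suffice to recover the associated matrix $F$, equivalently the pair $(\tau_1,\tau_2)$, from just two scalar Weyl functions, and I will show that the three spectra carry exactly that much information.

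First I would translate the spectra into characteristic functions. Using the forms \eqref{UV_simp} and the solutions $C_k$ normalized by $U_s(C_k)=\delta_{s,k}$, a nontrivial solution of \eqref{l_{4}} satisfying $V_3(y)=V_4(y)=0$ together with $U_i(y)=U_j(y)=0$ exists precisely when the $2\times 2$ determinant $D[a,b](\lambda):=\det[V_s(C_r)]_{s\in\{3,4\},\,r\in\{a,b\}}$ vanishes, where $\{a,b\}=\{1,2,3,4\}\setminus\{i,j\}$. Thus the characteristic functions of $\mathfrak S_{12}$, $\mathfrak S_{13}$, $\mathfrak S_{23}$ are $D[3,4]=\Delta_{2,2}$, $D[2,4]=\Delta_{3,2}$, and $D[1,4]$, respectively. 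The first two are genuine entries $\Delta_{j,k}$, so Lemma~\ref{S} identifies $\mathfrak S_{12}$ and $\mathfrak S_{13}$ with the problems $S_{2,2}$ and $S_{3,2}$ and guarantees that their spectra determine $\Delta_{2,2}$ and $\Delta_{3,2}$ as entire functions. The problem $\mathfrak S_{23}$ is the exceptional one: its boundary conditions omit $U_1$, so $D[1,4]$ is \emph{not} one of the $\Delta_{j,k}$ and Lemma~\ref{S} does not apply. I would therefore prove a Lemma~\ref{S}-type statement for $\mathfrak S_{23}$ by hand, checking that $D[1,4]$ is entire with the same coefficient-independent leading asymptotics as the other characteristic functions, so that $\mathfrak S_{23}=\tilde{\mathfrak S}_{23}$ forces $D[1,4]=\tilde D[1,4]$.

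Next I would pass from the characteristic functions to Weyl functions. From $\Delta_{2,2}=\tilde\Delta_{2,2}$ and $\Delta_{3,2}=\tilde\Delta_{3,2}$ I obtain equality of $\mathcal M_{3,2}=\Delta_{3,2}/\Delta_{2,2}$, and from $D[1,4]=\tilde D[1,4]$ together with $\Delta_{2,2}=\tilde\Delta_{2,2}$ I obtain equality of the ratio $D[1,4]/\Delta_{2,2}$. A short computation with the Weyl solutions $\Phi_1,\Phi_2$ (writing $\Phi_k=C_k+\sum_{j>k}\mathcal M_{j,k}C_j$) expresses $D[1,4]/\Delta_{2,2}$ as a polynomial combination of the Weyl functions of the form $\mathcal M_{2,1}\mathcal M_{3,2}-\mathcal M_{3,1}$ (for a suitable normalization), so the three spectra jointly determine $\mathcal M_{3,2}$ and the combination $\mathcal M_{2,1}\mathcal M_{3,2}-\mathcal M_{3,1}$. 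Here Assumption~\ref{def:G} is essential: the separation of the zeros of consecutive $\Delta_{m,m}$ prevents cancellation between numerators and denominators, so that equality of the spectra genuinely passes to equality of these meromorphic functions, and it is also the hypothesis under which the reconstruction of Section~\ref{sec:Weyl} operates. Applying that uniqueness theorem yields $F=\tilde F$, hence $\tau_1=\tilde\tau_1$ and $\tau_2=\tilde\tau_2$ in $L_2(0,1)$; since $p=\tau_1'$ and $q=\tau_2''$ as distributions, this gives $p=\tilde p$ in $W_2^{-1}(0,1)$ and $q=\tilde q$ in $W_2^{-2}(0,1)$.

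I expect the main obstacle to be the handling of $\mathfrak S_{23}$ and the verification that the resulting partial data are complete. Because this spectrum corresponds to no $S_{j,k}$, I cannot quote Lemma~\ref{S} and must establish its characteristic function and asymptotics directly; more importantly, I must show that the pair $\{\mathcal M_{3,2},\,\mathcal M_{2,1}\mathcal M_{3,2}-\mathcal M_{3,1}\}$ — rather than the full Weyl-Yurko matrix — already determines $(\tau_1,\tau_2)$ for the special structure \eqref{matr4}. Proving this completeness, i.e.\ that these few elements suffice, is exactly the work carried out in Section~\ref{sec:Weyl} via the method of spectral mappings under the separation Assumption~\ref{def:G}, and it is where the specific algebraic form of the associated matrix \eqref{matr4} must be exploited.
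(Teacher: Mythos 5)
Your reduction of the three spectra to characteristic functions is sound and in fact makes explicit a point the paper passes over quickly: the characteristic function of $\mathfrak S_{23}$ is $D[1,4]=\det[V_s(C_r)]_{s\in\{3,4\},\,r\in\{1,4\}}$, and eliminating $V_3(C_1),V_4(C_1)$ via $V_3(\Phi_1)=V_4(\Phi_1)=0$ gives $D[1,4]=(m_{21}m_{32}-m_{31})\Delta_{22}$, which by the symmetry relation \eqref{4.} equals $m_{42}\Delta_{22}=-\Delta_{42}$. So your computation, once completed, shows that the three spectra determine $m_{32}$ and $m_{42}=m_{21}m_{32}-m_{31}$ — exactly the paper's starting point \eqref{eqm} — and your worry about Lemma~\ref{S} being inapplicable to $\mathfrak S_{23}$ disappears (though your Hadamard-type fallback would also work).

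The genuine gap is in your final step. You assert that proving the pair $\{m_{32},\,m_{21}m_{32}-m_{31}\}$ determines $(\tau_1,\tau_2)$ ``is exactly the work carried out in Section~\ref{sec:Weyl}.'' It is not: Theorem~\ref{mfun} takes as input all three sub-diagonal Weyl functions $m_{21},m_{32},m_{43}$, and nothing in Section~\ref{sec:Weyl} extracts $m_{21}$ from the product combination $m_{21}m_{32}-m_{31}$. That extraction is the technical core of the paper's proof of Theorem~\ref{thm:main} and is absent from your proposal. Concretely, the paper (i) uses \eqref{4.} to get $m_{43}=m_{21}$, so only $m_{21}=\tilde m_{21}$ remains to be shown; (ii) at each zero $\lambda_{s2}$ of $\Delta_{22}$ of multiplicity $\kappa_s$, observes that $m_{21}$ and $m_{31}$ are regular there by Assumption~\ref{def:G} and matches principal parts in $m_{42}=m_{32}m_{21}-m_{31}$ to obtain $m_{21}^{(j)}(\lambda_{s2})=\tilde m_{21}^{(j)}(\lambda_{s2})$ for $j=\overline{0,\kappa_s-1}$; and (iii) applies Lemma~\ref{entire} to conclude that $(\Delta_{11}\tilde\Delta_{21}-\tilde\Delta_{11}\Delta_{21})/\Delta_{22}$ is entire, then forces it to vanish identically using the growth estimates of Proposition~\ref{prof4} together with the Phragm\'en--Lindel\"of and Liouville theorems. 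Without steps (i)--(iii), or a substitute for them, your argument stops at knowing $m_{32}$ and $m_{42}$ and cannot legitimately invoke Theorem~\ref{mfun}.
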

	
Furthermore, we consider the following fifth-order differential expression
\begin{equation} \label{l_{5}}
l_{5}(y)=y^{(5)}+(py'')'+(py')''+(qy)'+qy', \quad x\in (0,1),
\end{equation}
where $p \in L_1(0,1), q \in W_{1}^{-1}(0, 1)$. The associated matrix has the form
	\begin{equation}\label{matr5}
		F(x)=\begin{pmatrix} 0 & 1 & 0 & 0 & 0\\ 0 & 0 & 1 & 0 & 0\\ \sigma_{1} & -\sigma_{0} & 0 & 1 & 0 \\ 0 & 0 & -\sigma_{0} & 0 & 1 \\ 0 & 0 & -\sigma_{1} & 0 & 0\end{pmatrix},
	\end{equation}
	where $\sigma_{0}=p$, $-\sigma_{1}'=q$, $\sigma_{0}, \sigma_{1} \in L_1(0,1)$.
	For the matrix $F(x)$ defined by \eqref{matr5} and the corresponding quasi-derivatives \eqref{quasi}, the relation $l_{5}(y)=y^{[5]}$ holds. 
	
For $(h,i,j)\in \{(1,2,3), (1,2,4), (1,2,5)\}$, denote by $\mathfrak{S}_{hij}$ the spectrum of the boundary value problem for the equation 
$$
l_{5}(y)=\lambda y, \quad x\in (0,1),
$$ 
subject to the boundary conditions
\begin{equation*}
	U_{h}(y)=U_{i}(y)=U_{j}(y)=0, \quad V_{4}(y)=V_{5}(y)=0.
\end{equation*}

Then, we obtain the following uniqueness result.

\begin{theorem}\label{thm:main2}
	If $\mathfrak{S}_{123}=\tilde{\mathfrak{S}}_{123}$, $\mathfrak{S}_{124}=\tilde{\mathfrak{S}}_{124}$, $\mathfrak{S}_{125}=\tilde{\mathfrak{S}}_{125}$, then 
	$p=\tilde{p}$ in $L_1(0,1)$ and $q=\tilde{q}$ in $W_2^{-1}(0,1)$. Thus, the specification of the three spectra $\mathfrak{S}_{123},\mathfrak{S}_{124},\mathfrak{S}_{125}$ uniquely determines the coefficients $p$ and $q$ of the differential expression $l_{5}(y)$.
\end{theorem}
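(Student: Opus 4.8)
The plan is to deduce Theorem~\ref{thm:main2} from the Weyl-function uniqueness result of Section~\ref{sec:Weyl}, the point being that the three prescribed spectra determine precisely those entries of the Weyl--Yurko matrix that the sparse structure of \eqref{matr5} needs for the recovery of $(\sigma_0,\sigma_1)$. First I would identify each spectrum with an entire characteristic function. Writing a prospective eigenfunction as $y=\sum_{r=1}^5 a_r C_r$, the forms \eqref{UV_simp} give $U_s(y)=a_s$, so the conditions $U_1(y)=U_2(y)=0$ shared by all three problems force $a_1=a_2=0$, the extra condition $U_h(y)=0$ removes $a_h$, and imposing $V_4(y)=V_5(y)=0$ on the surviving two-dimensional span produces a $2\times2$ determinant. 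A direct computation identifies these determinants as $\Delta_{3,3}$, $\Delta_{4,3}$, $\Delta_{5,3}$, i.e.\ $\mathfrak{S}_{12h}=\Lambda_{h,3}$ for $h=3,4,5$. By Lemma~\ref{S}, coincidence of the multisets forces coincidence of the associated entire functions, so the hypotheses yield
\begin{equation*}
\Delta_{3,3}=\tilde\Delta_{3,3},\qquad \Delta_{4,3}=\tilde\Delta_{4,3},\qquad \Delta_{5,3}=\tilde\Delta_{5,3}.
\end{equation*}

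Next I would assemble the two relevant Weyl--Yurko entries. The third Weyl solution has the form $\Phi_3=C_3+M_{4,3}C_4+M_{5,3}C_5$, being the unique solution with $U_s(\Phi_3)=\delta_{s,3}$ for $s\le3$ and $V_4(\Phi_3)=V_5(\Phi_3)=0$; solving this $2\times2$ system by Cramer's rule gives $M_{4,3}=-\Delta_{4,3}/\Delta_{3,3}$ and $M_{5,3}=-\Delta_{5,3}/\Delta_{3,3}$. Here Assumption~\ref{def:G} enters: separation of the zeros of $\Delta_{3,3}$ from those of $\Delta_{2,2}$ and $\Delta_{4,4}$ rules out spurious cancellations, so that these ratios are genuine meromorphic Weyl functions carrying the full content of the three spectra. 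Together with the displayed equalities this gives $M_{4,3}=\tilde M_{4,3}$ and $M_{5,3}=\tilde M_{5,3}$.

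Finally I would invoke the uniqueness theorem of Section~\ref{sec:Weyl}, which for operators of the form \eqref{l_{5}} asserts that this single column $(M_{4,3},M_{5,3})$ of the Weyl--Yurko matrix already determines the associated matrix; hence $F=\tilde F$. Comparing entries in \eqref{matr5} then yields $\sigma_0=\tilde\sigma_0$ and $\sigma_1=\tilde\sigma_1$ as elements of $L_1(0,1)$, whence $p=\sigma_0=\tilde\sigma_0=\tilde p$ and $q=-\sigma_1'=-\tilde\sigma_1'=\tilde q$ in the sense of distributions, which is the assertion.

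I expect the essential difficulty to be the reduction to just two Weyl entries. In the general theory the whole Weyl--Yurko matrix is needed, and the fact that the column $(M_{4,3},M_{5,3})$ suffices is a consequence of the sparsity of \eqref{matr5} together with the separation condition; confirming that this column is lossless --- that nothing is lost in passing from the multisets $\Lambda_{h,3}$ through the entire functions $\Delta_{h,3}$ to the meromorphic Weyl functions, and that Section~\ref{sec:Weyl} applies with exactly these inputs --- is the delicate part. By contrast, the determinant identifications and the final passage $F=\tilde F\Rightarrow(p,q)=(\tilde p,\tilde q)$ are routine.
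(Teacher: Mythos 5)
Your identification of the three spectra with the characteristic functions $\Delta_{3,3}$, $\Delta_{4,3}$, $\Delta_{5,3}$ via Lemma~\ref{S}, and hence the conclusion $m_{43}=\tilde m_{43}$, $m_{53}=\tilde m_{53}$, matches the opening of the paper's proof and is correct. The gap is in your final step. Theorem~\ref{mfun} does \emph{not} assert that the column $(m_{43},m_{53})$ determines the operator: it requires all subdiagonal entries $m_{k+1,k}$ for $k=\overline{1,n-1}$, i.e.\ $m_{21}$, $m_{32}$, $m_{43}$, $m_{54}$ when $n=5$. You flag the reduction to two Weyl entries as ``the delicate part'' but then simply assert that Section~\ref{sec:Weyl} applies with these inputs; it does not, and bridging exactly this is the substance of the paper's proof.

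Concretely, the paper proceeds as follows. The symmetry relations \eqref{5.} of Lemma~\ref{real0} (consequences of $M^{\star}(\lambda)=M(-\lambda)$ for $n=5$ and Proposition~\ref{lem:M}) give $m_{32}(-\lambda)=m_{43}(\lambda)$ and $m_{54}(\lambda)=m_{21}(-\lambda)$, so $m_{32}$ comes for free and everything reduces to $m_{21}$. The third relation in \eqref{5.}, namely $m_{31}(-\lambda)-m_{21}(-\lambda)m_{43}(\lambda)+m_{53}(\lambda)=0$, is then expanded in Laurent series at the singular points associated with the zeros $\lambda_{s2}$ of $\Delta_{22}$; comparing the coefficients of the principal parts (using that $m_{43}$ and $m_{53}$ are known and that Assumption~\ref{def:G} keeps $m_{21}$ and $m_{31}$ regular there) yields $m_{21}^{(j)}(\lambda_{s2})=\tilde m_{21}^{(j)}(\lambda_{s2})$ for $j=\overline{0,\kappa_{s2}-1}$. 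This is only interpolation data for $m_{21}$ on a discrete set; to upgrade it to the identity $m_{21}\equiv\tilde m_{21}$ the paper forms $G_{2}=\Delta_{11}\tilde\Delta_{21}-\tilde\Delta_{11}\Delta_{21}$, invokes Lemma~\ref{entire} to see that $G_{2}/\Delta_{22}$ is entire, and kills it using the growth estimates of Proposition~\ref{prof4} on a ray together with the Phragm\'en--Lindel\"of theorem. Only then does Theorem~\ref{mfun}, fed with $m_{21},m_{32},m_{43},m_{54}$, give $\mathcal L=\tilde{\mathcal L}$. None of these steps appears in your proposal, so as written the appeal to the Weyl-function uniqueness theorem is unjustified.
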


\begin{remark}
	We can replace the linear forms \eqref{UV_simp} by other suitable forms in \eqref{defU}, \eqref{defV}, and our approach can also be adapted.
\end{remark}

\section{Preliminaries} \label{sec:prelim}

Let us define the notation that will be frequently used in the following sections.
If for $\lambda\rightarrow\lambda_{0}$
$$A(\lambda)=\sum_{k=-q}^{p}a_{k}(\lambda-\lambda_{0})^{k}+o((\lambda-\lambda_{0})^{p}),$$
then
$$[A(\lambda)]|_{\lambda=\lambda_{0}}^{\langle k\rangle}=A_{\langle k\rangle}(\lambda_{0}):=a_{\langle k\rangle}.$$

\subsection{Weyl-Yurko matrix} \label{sec:WY}$\newline$

Denote by $\{ \Phi_k(x,\lambda) \}_{k = 1}^n$ the solutions of equation \eqref{eqv} that satisfy the boundary conditions
\begin{equation} \label{bcPhi}
U_{s}(\Phi_k) = \delta_{s,k}, \quad s = \overline{1,k}, \qquad
V_{s}(\Phi_k) = 0, \quad s = \overline{k+1,n}.
\end{equation}

For any function $y$ in the domain $\mathcal D_F$, we introduce the notation $\vec y(x) = \mbox{col} ( y^{[0]}(x), y^{[1]}(x), \ldots, y^{[n-1]}(x))$. Consider the $n \times n$-matrices $C(x, \lambda) :=
[\vec C_k(x, \lambda)]_{k = 1}^n$ and $\Phi(x, \lambda) = [\vec \Phi_k(x, \lambda)]_{k = 1}^n$.

It has been shown in
\cite{Bon2} that the following relation holds: 
\begin{equation} \label{relM}
\Phi(x, \lambda) = C(x, \lambda) M(\lambda) 
\end{equation}
where the matrix function $M(\lambda)$ is called \textit{the Weyl-Yurko matrix} of the problem $\mathcal L$. The Weyl-Yurko matrix is the main spectral characteristics in the inverse problem theory for higher-order differential operators (see, e.g., \cite{Bon2, Bon1, Yur}).  

Due to the results of \cite{Bon2}, the Weyl-Yurko matrix $M(\lambda) = [M_{j,k}(\lambda)]_{j,k = 1}^n$ is unit lower-triangular, and its non-trivial entries have the form
\begin{equation*} \label{Mjk}
M_{j,k}(\lambda) = -\frac{\Delta_{j,k}(\lambda)}{\Delta_{k,k}(\lambda)}, \quad 1 \le k < j \le n.
\end{equation*}
Hence, $M(\lambda)$ is meromorphic in $\lambda$, and the poles of the $k$-th column of $M(\lambda)$ coincide with the zeros of $\Delta_{k,k}(\lambda)$. Note that these poles can be multiple.

Using the boundary conditions \eqref{bcPhi} on $\Phi_k(x,\lambda)$, we obtain
\begin{equation}\label{Phik}
\Phi_k(x,\lambda)\!=\!(\Delta_{kk}(\lambda))^{-1}\!\det[C_{\nu}(x,\lambda),V_{k+1}(C_{\nu}),\dots,V_{n}(C_{\nu})]_{\nu=\overline{k,n}}, \quad 1\leq k\leq n.
\end{equation}

\subsection{Problems $\mathcal L$ and $\mathcal L^{\star}$}\label{sec:star}$\newline$

Following \cite[Section 2.1]{Bon1}, we define the auxiliary problem $\mathcal{L^{\star}}=(F^{\star}(x),U^{\star} , V^{\star})$, which
helps us to investigate the structure of the Weyl-Yurko matrix. Along with the
associated matrix $F \in \mathfrak F_n$, we consider the matrix function $F^{\star}(x) = [f_{k,j}^{\star}(x)]_{k,j = 1}^n$
whose entries are defined as follows:
\begin{equation*} \label{fstar}
	f_{k,j}^{\star}(x) := (-1)^{k+j+1} f_{n-j+1, n-k+1}(x).
\end{equation*}
Obviously, $F^{\star} \in \mathfrak F_n$.

Suppose that $y \in \mathcal D_F$ and $z \in \mathcal D_{F^{\star}}$, the quasi-derivatives for $y$ are defined via \eqref{quasi} by using the elements of $F(x)$, the quasi-derivatives for $z$ are defined as
\begin{equation*} \label{quasiz}
	z^{[0]} := z, \quad z^{[k]} = (z^{[k-1]})' - \sum_{j = 1}^k f^{\star}_{k,j} z^{[j-1]}, \quad k = \overline{1,n},
\end{equation*}
and
$$
\mathcal D_{F^{\star}} := \{ z \colon z^{[k]} \in AC[0,1], \, k = \overline{0, n-1} \}.
$$

Put
\begin{gather*}
	\ell_n(y) := y^{[n]}, \quad \ell_n^{\star}(z) := (-1)^n z^{[n]}.
\end{gather*}

Along with $U$, $V$, consider the matrices 
\begin{equation*} \label{defUs}
	U^{\star} := [J_0^{-1} U^{-1} J]^T,\quad V^{\star} := [J_1^{-1} V^{-1} J]^T
\end{equation*}
where the sign $T$ denotes the matrix transpose, $J_a = [(-1)^{p_{k,a}^{\star}}\delta_{k,n-j+1}]_{k,j = 1}^n$, $p_{k,a}^{\star} := n - 1 - p_{n-k+1, a}$, $a = 0, 1$, $J=[(-1)^{k+1}\delta_{k,n-j+1}]_{k,j = 1}^n$.

The matrices $U$, $V$ generate the linear forms
\begin{align*}
U_{s}^{\star}(z) & = z^{[p_{s,0}^{\star}]}(0) + \sum_{j = 1}^{p_{s,0}^{\star}} u_{s,j,0}^{\star} z^{[j-1]}(0), \quad s = \overline{1, n}, \\
V_{s}^{\star}(z) & = z^{[p_{s,1}^{\star}]}(1) + \sum_{j = 1}^{p_{s,1}^{\star}} u_{s,j,1}^{\star} z^{[j-1]}(1), \quad s = \overline{1, n}.
\end{align*}

Consider the problem  $\mathcal L^{\star} = (F^{\star}(x), U^{\star}, V^{\star})$. Following the arguments of Subsection~\ref{sec:WY}, denote by $\{ C_k^{\star}(x, \lambda) \}_{k = 1}^n$ and $\{ \Phi_k^{\star}(x, \lambda) \}_{k = 1}^n$ the solutions of equation $\ell_n^{\star}(z) = \lambda z$, $x \in (0, 1)$, satisfying the conditions
\begin{gather*} \nonumber
	U^{\star}_{s} (C_k^{\star}) = \delta_{s,k}, \quad s = \overline{1, n}, \\ \label{bcPhis}
	U_{s}^{\star}(\Phi_k^{\star}) = \delta_{s,k}, \quad s = \overline{1, k}, \qquad
	V_{s}^{\star}(\Phi_k^{\star}) = 0, \quad s = \overline{k+1,n}.
\end{gather*}
Put $C^{\star}(x, \lambda) := [\vec C_k^{\star}(x, \lambda)]_{k = 1}^n$, $\Phi^{\star}(x, \lambda) := [\vec \Phi_k^{\star}(x, \lambda)]_{k = 1}^n$.
Then, the relation 
\begin{equation} \label{relMs}
	\Phi^{\star}(x, \lambda) = C^{\star}(x, \lambda) M^{\star}(\lambda)
\end{equation}
holds, where $M^{\star}(\lambda)$ is the Weyl-Yurko matrix of the problem $\mathcal L^{\star}$. 

\begin{proposition}[\cite{Bon1}] \label{lem:M}
	The following relations hold:
	\begin{gather*} \label{MJM}
		[M^{\star}(\lambda)]^T J_0 M(\lambda) = J_0.
	\end{gather*}
\end{proposition}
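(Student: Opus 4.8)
The plan is to derive the identity from the Lagrange bilinear concomitant attached to the quasi-derivatives, as in the general theory of \cite{Yur, Bon1}. For $y \in \mathcal D_F$ and $z \in \mathcal D_{F^{\star}}$ introduce the concomitant
\[
\langle z, y\rangle(x) := \vec z(x)^T J\, \vec y(x) = \sum_{k=1}^n (-1)^{k+1} z^{[k-1]}(x)\, y^{[n-k]}(x).
\]
First I would establish the Lagrange identity
\[
\frac{d}{dx}\langle z,y\rangle(x) = z(x)\,\ell_n(y)(x) - \ell_n^{\star}(z)(x)\, y(x),
\]
by differentiating term by term and substituting the definitions of the quasi-derivatives together with $f_{k,j}^{\star} = (-1)^{k+j+1} f_{n-j+1,n-k+1}$. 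The choice of $F^{\star}$ and of the sign in $\ell_n^{\star}(z)=(-1)^n z^{[n]}$ is precisely what makes the right-hand side collapse to this form. As an immediate consequence, if $\ell_n(y)=\lambda y$ and $\ell_n^{\star}(z)=\lambda z$ with the same $\lambda$, then $\langle z,y\rangle(x)$ is independent of $x$.

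Applying this columnwise to the fundamental matrices, both $[C^{\star}(x,\lambda)]^T J\, C(x,\lambda)$ and $[\Phi^{\star}(x,\lambda)]^T J\, \Phi(x,\lambda)$ are independent of $x$ (for $\lambda$ off the poles of $M$ and $M^{\star}$). To identify the first one I would evaluate it at $x=0$: from $U_{s}(C_k)=\delta_{s,k}$ and $U_{s}^{\star}(C_k^{\star})=\delta_{s,k}$ one gets $C(0,\lambda)=U^{-1}$ and $C^{\star}(0,\lambda)=(U^{\star})^{-1}$, and plugging in $U^{\star}=[J_0^{-1}U^{-1}J]^T$ (so that $[(U^{\star})^{-1}]^T=J^{-1}U J_0$) a short computation using the structure of $U$ and the definition of $J_0$ yields
\[
[C^{\star}(x,\lambda)]^T J\, C(x,\lambda) \equiv J_0.
\]
Substituting the factorizations $\Phi = C M$ and $\Phi^{\star} = C^{\star} M^{\star}$ from \eqref{relM} and \eqref{relMs} then gives
\[
[\Phi^{\star}(x,\lambda)]^T J\, \Phi(x,\lambda) = (M^{\star}(\lambda))^T J_0\, M(\lambda).
\]

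The final step is a support argument pinning down this matrix. On one hand, since $M$ and $M^{\star}$ are unit lower-triangular and $J_0$ is anti-diagonal, the $(l,k)$-entry of $(M^{\star})^T J_0 M$ vanishes unless $k+l\le n+1$, and on the anti-diagonal $k+l=n+1$ it coincides with the corresponding entry of $J_0$. On the other hand, evaluating $[\Phi^{\star}]^T J\Phi$ at $x=1$ and using $V_{s}(\Phi_k)=0$ for $s>k$, $V_{s}^{\star}(\Phi_l^{\star})=0$ for $s>l$, together with the anti-diagonality of $J_1$ coming from $V^{\star}=[J_1^{-1}V^{-1}J]^T$, shows that the same $(l,k)$-entry vanishes unless $k+l\ge n+1$. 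Intersecting the two supports leaves only the anti-diagonal $k+l=n+1$, on which the matrix already equals $J_0$; hence $(M^{\star}(\lambda))^T J_0\, M(\lambda)=J_0$, first for generic $\lambda$ and then for all $\lambda$ by analytic continuation.

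The main obstacle I anticipate is the Lagrange identity itself: verifying that $F^{\star}$, defined through $f_{k,j}^{\star}=(-1)^{k+j+1}f_{n-j+1,n-k+1}$, is exactly the adjoint for which the concomitant is conserved requires careful sign bookkeeping in the telescoping computation, and for odd $n$ the correct use of the factor $(-1)^n$ in $\ell_n^{\star}$. By contrast, the endpoint identity $[C^{\star}]^T J C\equiv J_0$ and the triangularity/support argument are routine once the definitions of $U^{\star}$, $V^{\star}$, $J_0$, and $J_1$ are unwound; for the simplified forms \eqref{UV_simp} one even has $J_0=J^{-1}$, which makes the endpoint computation transparent.
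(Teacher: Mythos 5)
The paper offers no proof of this proposition (it is quoted from \cite{Bon1}), and your argument reconstructs precisely the proof of that reference: the Lagrange identity for the quasi-derivatives built from $F$ and $F^{\star}$, constancy of the concomitant $\vec{z}(x)^T J\, \vec{y}(x)$ for solutions with the same $\lambda$, the endpoint identity $[C^{\star}]^T J C \equiv J_0$, and the two complementary support arguments for $(M^{\star})^T J_0 M$ coming from the triangularity of $M$, $M^{\star}$ and from the vanishing boundary conditions of $\Phi_k$, $\Phi_l^{\star}$ at $x=1$. The only step that is not purely routine is the claim $[(U^{\star})^{-1}]^T J U^{-1} = J_0$ for general forms \eqref{defU}: it is equivalent to $U (J_0 J) = (J J_0) U$, which relies on the normalization of the coefficients $u_{s,j,a}$ assumed in \cite{Bon1}; for the forms \eqref{UV_simp} actually used in this paper, $U$ is the anti-identity and $J_0 = J^{-1}$, so the identity is immediate, as you observe.
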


The arguments above in this section are valid for any matrix $F \in \mathfrak F_n$ and boundary condition forms \eqref{defU}, \eqref{defV}. Now, proceed to the specific cases corresponding to Theorems~\ref{thm:main1},~\ref{thm:main}, and \ref{thm:main2}. 

\begin{lemma}\label{real0}
Let $F(x)$ be defined by any of the formulas \eqref{matr3}, \eqref{matr4}, and \eqref{matr5}. Suppose that the linear forms $U_s$ and $V_s$ are given by \eqref{UV_simp}. Then
	\begin{equation}\label{real01}
		M^{\star}(\lambda) = M((-1)^n \lambda).
	\end{equation}
Moreover, for $n=3$:
\begin{equation}\label{3.}
m_{21}(-\lambda)={m}_{32}(\lambda);
\end{equation}
for $n=4$:
\begin{equation}\label{4.}
	m_{43}(\lambda)={m}_{21}(\lambda), \quad m_{42}{(\lambda)}-m_{32}(\lambda)m_{21}(\lambda)+m_{31}(\lambda)=0;
\end{equation}
for $n=5$:
\begin{equation}\label{5.}
\begin{array}{l}
	m_{21}(-\lambda)=m_{54}(\lambda), \quad m_{32}(-\lambda)=m_{43}(\lambda), \\
	m_{31}(-\lambda)-m_{21}(-\lambda)m_{43}(\lambda)+{m}_{53}(\lambda)=0.
\end{array}
\end{equation}
\end{lemma}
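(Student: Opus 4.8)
The plan is to derive everything from the $\star$-duality of Subsection~\ref{sec:star} together with Proposition~\ref{lem:M}. The crucial observation is that each of the associated matrices \eqref{matr3}, \eqref{matr4}, \eqref{matr5} is \emph{self-dual}: a direct entrywise check using $f_{k,j}^{\star} = (-1)^{k+j+1} f_{n-j+1,n-k+1}$ shows $F^{\star}(x) = F(x)$ in all three cases. For example, for \eqref{matr3} one gets $f_{2,1}^{\star} = (-1)^{4} f_{3,2} = -p = f_{2,1}$, and the remaining entries check out the same way; the fourth- and fifth-order matrices are handled identically. I would first record this computation. Next I would verify that the simple forms \eqref{UV_simp} are also self-dual, i.e. $U^{\star} = U$ and $V^{\star} = V$. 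Here $U = V = P$, the anti-diagonal reversal matrix with $P_{s,j} = \delta_{j,n-s+1}$, and since $p_{k,a}^{\star} = n-1-p_{n-k+1,a} = n-k$, the matrices factor as $J_a = D_a P$ and $J = D P$ with diagonal sign matrices $D_a = \mathrm{diag}((-1)^{n-k})$ and $D = \mathrm{diag}((-1)^{k+1})$. Conjugating a diagonal matrix by $P$ reverses its diagonal, so $P D_0 P = D$, whence $U^{\star} = [J_0^{-1}U^{-1}J]^T = [(P D_0)(P)(D P)]^T = [D^2 P]^T = P = U$, and likewise $V^{\star} = V$.

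With $F^{\star} = F$ the quasi-derivatives \eqref{quasi} of the $\star$-problem coincide with those of $\mathcal{L}$, so the relation $\ell_n^{\star}(z) = (-1)^n z^{[n]}$ turns the equation $\ell_n^{\star}(z) = \lambda z$ into $z^{[n]} = (-1)^n \lambda\, z$, that is, the original equation \eqref{eqv} at spectral parameter $(-1)^n \lambda$. Since in addition $U^{\star} = U$ and $V^{\star} = V$, the fundamental solutions determined by the initial and boundary conditions satisfy $C^{\star}(x,\lambda) = C(x,(-1)^n\lambda)$ and $\Phi^{\star}(x,\lambda) = \Phi(x,(-1)^n\lambda)$ by uniqueness for the Cauchy problem. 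Substituting this into the defining relations \eqref{relM} and \eqref{relMs} and cancelling the invertible matrix $C^{\star}(x,\lambda)$ yields \eqref{real01}, namely $M^{\star}(\lambda) = M((-1)^n\lambda)$.

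Finally, the entrywise relations \eqref{3.}, \eqref{4.}, \eqref{5.} follow by inserting \eqref{real01} into Proposition~\ref{lem:M}, which becomes $[M((-1)^n\lambda)]^T J_0\, M(\lambda) = J_0$. For each $n$ I would write out $J_0$ explicitly (an anti-diagonal matrix carrying $(-1)^{n-k}$ at position $(k,n-k+1)$), use that $M$ is unit lower-triangular, and read off the stated identities from individual entries of the product: for $n=3$ the $(1,2)$ entry gives $m_{32}(\lambda) = m_{21}(-\lambda)$; for $n=4$ the $(1,3)$ and $(1,2)$ entries give $m_{43} = m_{21}$ and $m_{42} - m_{32}m_{21} + m_{31} = 0$; for $n=5$ the $(1,4)$, $(2,3)$, and $(1,3)$ entries give the three relations in \eqref{5.}. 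I expect the main obstacle to be purely organizational, namely carefully tracking the sign conventions hidden in $J_0, J_1, J$ and in $p_{k,a}^{\star}$ so as to confirm the self-duality $U^{\star} = U$, $V^{\star} = V$; once this is in place the identity \eqref{real01} is immediate and the entry relations reduce to a short linear-algebra computation.
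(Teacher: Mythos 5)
Your proposal is correct and follows essentially the same route as the paper: verify the self-duality $F^{\star}=F$, $U^{\star}=U$, $V^{\star}=V$, deduce $C^{\star}(x,\lambda)=C(x,(-1)^n\lambda)$ and $\Phi^{\star}(x,\lambda)=\Phi(x,(-1)^n\lambda)$, obtain \eqref{real01} from \eqref{relM} and \eqref{relMs}, and read off \eqref{3.}--\eqref{5.} from Proposition~\ref{lem:M}. The only difference is that you spell out the computations (and the correct entry positions) that the paper dismisses with ``by calculating, we can easily obtain.''
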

\begin{proof}
	By calculating, we can easily obtain the relations $U^{\star}=U$, $V^{\star}=V$, and $l_{n}^{\star}(y)=(-)^ny^{[n]}$. Therefore, 
	$$
	C^{\star}(x,\lambda)=C(x,(-1)^n\lambda), \quad \Phi^{\star}(x,\lambda)=\Phi(x,(-1)^n\lambda).
	$$ 
	Consequently, it follows from \eqref{relM} and \eqref{relMs} that
	$M^{\star}(\lambda)=M((-1)^n\lambda)$. The relations \eqref{3.}, \eqref{4.}, and \eqref{5.} can be obtained directly from Proposition \ref{lem:M} and the formula \eqref{real01}.
\end{proof}				

\subsection{Asymptotics}$\newline$

Let $\lambda = \rho^n$.
Consider the sectors
\begin{equation*}
	\Gamma_{k} :=\left\{\rho\colon \frac{\pi(k-1)}{n}\;\textless\; \arg\rho\;\textless \;\frac{\pi k}{n} \right\}, \quad k=\overline{1,2n},
\end{equation*}
and the regions
\begin{equation*}
	\mathcal{G}_{\delta}=\mathcal  G_{\delta, ijk} := \{\rho\in\overline{\Gamma}_{k}:|\rho-\rho_{ij,l}|\geq\delta,\;l \geq 1\}, \quad \delta\textgreater 0,
\end{equation*}
where $\{\rho_{ij,l}\}_{l\geq 1}$ are the zeros of $\Delta_{ij}(\rho^n)$ in the $\rho$-plane.

If $\rho$ lies in a fixed sector $\Gamma=\Gamma_{k}$, we denote by $\{\omega_{j}\}_{j=1}^{n}$ the roots of equation $\omega^{n}=1$ which are numbered in the following order: 
\begin{equation*}
	\Re(\rho \omega_{1})\;\textless\;\Re(\rho \omega_{2})\;\textless\;\dots\;\textless\;\Re(\rho \omega_{n}), \quad \rho\;\in\;\Gamma.
\end{equation*}

\begin{proposition}(\cite{Bon6})\label{prof4}
	The following relations hold as 
	$|\rho|\rightarrow\infty:$
	\begin{equation*}
		\begin{array}{ll}
			\Delta_{ij}(\lambda)=O(\rho^{a_{ij}}e^{\rho s_{j}}),\quad \rho\in\overline{\Gamma},\\
			|\Delta_{ij}(\lambda)|\geq C_{\delta}|\rho|^{a_{ij}}e^{Re(\rho s_{j})},\quad\rho\in\mathcal{G}_{\delta},
		\end{array}
	\end{equation*}
	where $\lambda=\rho^n$, $C_{\delta}$ is a constant depending on $\mathcal{G}_{\delta}$,  $s_{j}=\sum\limits_{k=j+1}^{n}\omega_{k}$,
	$$a_{ij}=\sum_{k=1}^{j-1}p_{k,0}+\sum_{k=j+1}^{n}p_{k,1}+p_{i,0}-\frac{n(n-1)}{2}.$$
\end{proposition}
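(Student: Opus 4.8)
The plan is to derive these estimates from Birkhoff-type asymptotics for a suitably normalized fundamental system of solutions of equation \eqref{eqv}, and then to read off the behaviour of $\Delta_{ij}(\lambda)$ from its determinantal expression. Throughout I work in a fixed sector $\Gamma=\Gamma_{k}$, where the real parts $\Re(\rho\omega_1)<\cdots<\Re(\rho\omega_n)$ are strictly ordered for $\rho\in\Gamma$; this ordering is what ultimately selects the dominant exponential.

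First I would pass to the first-order system \eqref{sys}, $Y'=(F(x)+\Lambda)Y$, and balance the $\rho$-dependence by the diagonal scaling $Y=T(\rho)Z$ with $T(\rho)=\mathrm{diag}(1,\rho,\dots,\rho^{n-1})$. The companion structure of $\Lambda$ together with the superdiagonal of $F$ produces, after this scaling, a leading constant matrix whose eigenvalues are $\rho\omega_1,\dots,\rho\omega_n$; diagonalizing it by the constant Vandermonde matrix of the roots $\{\omega_j\}$ reduces the system to the Levinson form $Z'=\rho\,\mathrm{diag}(\omega_1,\dots,\omega_n)\,Z+A(x,\rho)Z$, where $A(x,\rho)$ collects the remaining terms. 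The membership $F\in\mathfrak F_n$ (off-diagonal entries in $L_1$, diagonal in $L_2$, zero trace) is exactly what guarantees that $A(\dott,\rho)$ is integrable on $[0,1]$ with the $\rho$-decay needed for the asymptotic-integration theorem in the distributional setting. Applying the Levinson/Birkhoff theorem in this $L_1$-form yields a fundamental matrix $Z(x,\rho)=[I+o(1)]\exp(\rho\,\mathrm{diag}(\omega_j)\,x)$ uniformly for $x\in[0,1]$ as $|\rho|\to\infty$ in $\overline\Gamma$, hence solutions $y_j$ of \eqref{eqv} with $y_j^{[s]}(x,\rho)=(\rho\omega_j)^s e^{\rho\omega_j x}(1+o(1))$.

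Next I would express the solutions $C_k(x,\lambda)$ through this fundamental system by solving the initial conditions $U_s(C_k)=\delta_{s,k}$. Since each $U_s$ has leading quasi-derivative of order $p_{s,0}$, the coefficients in $C_k=\sum_j c_{k,j}y_j$ are obtained from a generalized Vandermonde system, and their leading orders in $\rho$ are governed by the indices $p_{k,0}$. Substituting these, together with $y_j^{[p_{s,1}]}(1,\rho)\sim(\rho\omega_j)^{p_{s,1}}e^{\rho\omega_j}$, into $\Delta_{j,k}(\lambda)=\det[V_s(C_r)]$ and expanding the determinant, the ordering $\Re(\rho\omega_1)<\cdots<\Re(\rho\omega_n)$ forces the dominant term to be the one pairing the indices $k+1,\dots,n$ with the largest real parts, which produces the exponential $e^{\rho s_j}$ with $s_j=\sum_{\nu=j+1}^n\omega_\nu$. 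Collecting all powers of $\rho$ coming from the coefficients $c_{r,j}$ and from the factors $(\rho\omega_j)^{p_{s,1}}$, together with the normalization of the Vandermonde determinant, yields exactly the exponent $a_{ij}=\sum_{k=1}^{j-1}p_{k,0}+\sum_{k=j+1}^n p_{k,1}+p_{i,0}-n(n-1)/2$, and the upper bound $\Delta_{ij}(\lambda)=\Oh(\rho^{a_{ij}}e^{\rho s_j})$ holds throughout $\overline\Gamma$ since every non-dominant term is controlled by this leading one.

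For the lower bound one argues that the coefficient of the dominant term is a nonzero constant (a product of differences $\omega_\mu-\omega_\nu$ arising from the Vandermonde structure), so away from the zeros the leading term cannot be cancelled by the remaining $o(1)$-corrections. Staying in $\mathcal G_\delta$, i.e.\ at distance $\geq\delta$ from the zeros $\rho_{ij,l}$, is precisely what provides the uniform lower bound $|\Delta_{ij}(\lambda)|\geq C_\delta|\rho|^{a_{ij}}e^{\Re(\rho s_j)}$ with $C_\delta$ depending only on $\delta$ and the sector. The main obstacle is the rigorous asymptotic integration under distributional coefficients: one must use the regularized system \eqref{sys} and the $L_1$-version of the Levinson theorem rather than the classical smooth Birkhoff theory, and one must secure uniformity up to the boundary rays of $\overline\Gamma$, where two of the quantities $\Re(\rho\omega_j)$ coincide and the separation of exponents degenerates; handling this Stokes-line transition, and verifying that the $o(1)$ error terms remain uniform there, is the technically delicate point. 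This analysis has been carried out in \cite{Bon6}, from which the stated estimates follow.
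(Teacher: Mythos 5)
The paper gives no proof of this proposition: it is stated as a result imported verbatim from \cite{Bon6}, so the paper's ``proof'' is the citation itself. Your sketch correctly outlines the Birkhoff-type asymptotic machinery that underlies the cited result (regularized first-order system, diagonal scaling and Vandermonde diagonalization, extraction of the dominant exponential $e^{\rho s_j}$ and the power $\rho^{a_{ij}}$ from the determinant, lower bound in $\mathcal{G}_{\delta}$), and, exactly like the paper, you ultimately defer the genuinely delicate points --- asymptotic integration with distributional coefficients and uniformity near the boundary rays of $\overline{\Gamma}$ --- to \cite{Bon6}, so your treatment is essentially the same as the paper's.
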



\subsection{Auxiliary Lemma}

\begin{lemma}\label{entire}
	Let $f(\cdot),\tilde{f}(\cdot),g(\cdot),\tilde{g}(\cdot),h(\cdot)$ be entire functions. Suppose $\{\lambda_m\}$ are the zeros of $h(\cdot)$ with the corresponding multiplicities $\{\kappa_{m}\}$. If $f(\lambda_m)\tilde{f}(\lambda_m)\neq0$ for all $m$, and 
	\begin{equation} \label{fracfg}
	\left(\frac{g}{f}\right)^{(k)}(\lambda_n)=\left(\frac{\tilde{g}}{\tilde{f}}\right)^{(k)}(\lambda_n), \:\: \text{for all} \:\: 0\leq k\leq \kappa_{m}-1, 
	\end{equation}
	then 
	$$
	H(\cdot):=\frac{f(\cdot)\tilde{g}(\cdot)-g(\cdot)\tilde{f}(\cdot)}{h(\cdot)}
	$$ 
	is an entire function.
\end{lemma}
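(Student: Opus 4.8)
The plan is to show that the only candidates for poles of $H$ are the zeros $\{\lambda_m\}$ of $h$, and then to verify that at each such point the numerator vanishes to at least the order $\kappa_m$ of the corresponding zero, so that every potential pole is removable. Write $N(\cdot) := f(\cdot)\tilde{g}(\cdot) - g(\cdot)\tilde{f}(\cdot)$ for the numerator of $H$. Since $f, \tilde f, g, \tilde g$ are entire, $N$ is entire, and $H = N/h$ is therefore meromorphic with its set of possible poles contained in the zero set of $h$. Thus it suffices to prove that for each $m$ the function $N$ has a zero of order at least $\kappa_m$ at $\lambda_m$.

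Fix $m$. Because $f(\lambda_m)\tilde f(\lambda_m) \neq 0$, both $f$ and $\tilde f$ are nonvanishing on a neighborhood of $\lambda_m$, so the quotients $g/f$ and $\tilde g/\tilde f$ are holomorphic there. The hypothesis \eqref{fracfg} states that the derivatives of $g/f$ and $\tilde g/\tilde f$ coincide at $\lambda_m$ up to order $\kappa_m - 1$; equivalently, the Taylor expansions of the two quotients about $\lambda_m$ agree through the term of degree $\kappa_m - 1$. Hence the holomorphic function
\[
\frac{g}{f} - \frac{\tilde g}{\tilde f}
\]
has a zero of order at least $\kappa_m$ at $\lambda_m$.

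Next I would rewrite this difference over a common denominator:
\[
\frac{g}{f} - \frac{\tilde g}{\tilde f} = \frac{g\tilde f - \tilde g f}{f \tilde f} = -\frac{N}{f\tilde f}.
\]
Since $f\tilde f$ is holomorphic and nonzero at $\lambda_m$, multiplication by $f\tilde f$ preserves the order of vanishing at that point; therefore $N$ itself has a zero of order at least $\kappa_m$ at $\lambda_m$. As $h$ has a zero of order exactly $\kappa_m$ there, the quotient $N/h$ has a removable singularity at $\lambda_m$. Running this argument over all $m$, every possible pole of $H$ is removable, and since $H$ is holomorphic wherever $h \neq 0$, it extends to an entire function, as claimed.

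The argument is essentially elementary; the only point requiring care is the translation of the derivative-matching condition \eqref{fracfg} into a statement about the order of vanishing of the difference of the two quotients (and hence of $N$). This translation relies crucially on the nonvanishing of $f\tilde f$ at each $\lambda_m$, which is exactly what guarantees that both quotients are genuinely holomorphic near $\lambda_m$ and that clearing the denominator $f\tilde f$ neither creates nor destroys the required zero of order $\kappa_m$.
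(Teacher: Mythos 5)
Your proof is correct and follows essentially the same route as the paper: both reduce to the identity $f\tilde{g}-g\tilde{f} = -f\tilde{f}\left(\tfrac{g}{f}-\tfrac{\tilde{g}}{\tilde{f}}\right)$ and exploit the nonvanishing of $f\tilde{f}$ at each $\lambda_m$. The only difference is presentational — you phrase it as ``multiplication by a unit preserves the order of vanishing,'' whereas the paper verifies $F^{(l-1)}(\lambda_m)=0$ by induction via the Leibniz rule; your version is the cleaner packaging of the same argument.
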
	
\begin{proof}
	Put $F(\lambda)=f(\lambda)\tilde{g}(\lambda)-g(\lambda)\tilde{f}(\lambda)$.
	Then, we need to show that $F^{(l-1)}(\lambda_m)=0$ for $1\leq l\leq \kappa_{m}$.
	Let us prove this by induction. By \eqref{fracfg}, it is obvious that $F(\lambda_m) = 0$. Suppose that $F^{(j)}(\lambda_m)=0$ for $0\leq j\leq i\;\textless \;\kappa_{m}-1$. It follows from \eqref{fracfg} that
	 $$
	 \bigg(\frac{f\tilde{g}-g\tilde{f}}{f\tilde{f}}\bigg)^{(k)}(\lambda_m)=0, 
	 $$ which means  $$\sum_{q=0}^{i+1}\binom{i+1}{q}(f\tilde{g}-g\tilde{f})^{(i+1-q)}(\lambda_m)\bigg(\frac{1}{f\tilde{f}}\bigg)^{(q)}(\lambda_m)=0.$$
	Consequently, 
	$$
	F^{(i+1)}(\lambda_m) = (f\tilde{g}-g\tilde{f})^{(i+1)}(\lambda_m)=0.
	$$
	By induction, this yields the claim.
\end{proof}

\section{Recovery from the Weyl functions} \label{sec:Weyl}

In this section, we prove Theorem~\ref{mfun} on the unique reconstruction of the differential equation coefficients from the Weyl functions $m_{k+1,k}(\lambda)$, $k = \overline{1,n-1}$, under Assumption~\ref{def:G}. The analogous result has been obtained by Yurko \cite{Yur} for the differential equation \eqref{ho} with regular coefficients $p_s \in W_1^s[0,1]$, $s = \overline{0,n-2}$. In this paper, we only need the following theorem for the problems $\mathcal L$ generated by the differential expressions \eqref{l_{3}}, \eqref{l_{4}}, and \eqref{l_{5}} together with the linear forms \eqref{UV_simp}.
However, the general conclusion can be deduced in a similar manner. 

\begin{theorem}\label{mfun}
	If $\mathcal L$ and $\tilde{\mathcal L}$ belong to $\mathcal{W}$ and $m_{k+1,k}(\lambda)=\tilde{m}_{k+1,k}(\lambda)$ for $k=\overline{1,n-1}$,
	then $\mathcal L=\tilde {\mathcal L}$.
\end{theorem}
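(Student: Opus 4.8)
The plan is to run the method of spectral mappings, reformulating the desired identity $\mathcal{L}=\tilde{\mathcal{L}}$ as a statement about the transition matrix $P(x,\lambda):=\Phi(x,\lambda)\,\tilde{\Phi}(x,\lambda)^{-1}$ between the Weyl solutions of the two problems. Using \eqref{relM} I would factor $P=C(\lambda)\big(M(\lambda)\tilde{M}(\lambda)^{-1}\big)\tilde{C}(\lambda)^{-1}$, suppressing $x$; for the forms \eqref{UV_simp} the fundamental matrices $C,\tilde{C}$ are entire in $\lambda$ with $\det C\equiv\pm1$, hence with entire inverses, so all poles of $P$ come from $N(\lambda):=M(\lambda)\tilde{M}(\lambda)^{-1}$. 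Since $M,\tilde{M}$ are unit lower-triangular and the hypothesis $m_{k+1,k}=\tilde{m}_{k+1,k}$ forces the first subdiagonal of $N$ to vanish, the matrix $N$ is unit lower-triangular with its lowest nontrivial entries two diagonals below the main one, and the poles of $P$ can lie only at the zeros of the scalar functions $\Delta_{k,k}$ and $\tilde{\Delta}_{k,k}$. Proving that these poles actually cancel is the whole point.

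Before that, I would pin down the scalar data. By Assumption~\ref{def:G} consecutive $\Delta_{m,m}$ share no zeros, so the poles of $m_{k+1,k}=-\Delta_{k+1,k}/\Delta_{k,k}$ locate the zeros of $\Delta_{k,k}$; Lemma~\ref{S} identifies these with a boundary value spectrum, and the sharp bounds of Proposition~\ref{prof4} fix the normalisation, yielding $\Delta_{k,k}=\tilde{\Delta}_{k,k}$ and then $\Delta_{k+1,k}=\tilde{\Delta}_{k+1,k}$ for all $k$. With the denominators now common, the residue of $P$ at a zero $\lambda_{0}$ of $\Delta_{k,k}$ is driven by the discrepancies $\Delta_{j,k}-\tilde{\Delta}_{j,k}$ with $j>k+1$, i.e. by the off-subdiagonal entries of $N$. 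To annihilate it I would apply Lemma~\ref{entire}: matching of the prescribed Weyl functions forces the relevant cross-products to vanish at $\lambda_{0}$ to the order of the pole. The entries $m_{j,k}$ with $j-k\ge2$ that are not prescribed directly are exactly those controlled by the adjoint symmetry $M^{\star}(\lambda)=M((-1)^{n}\lambda)$ of Lemma~\ref{real0}; the explicit relations \eqref{3.}, \eqref{4.}, \eqref{5.} coming from Proposition~\ref{lem:M} express them through the prescribed ones up to a remainder that the residue computation kills.

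Once $P$ is entire, I would finish by a Liouville argument: transferring the asymptotics of Proposition~\ref{prof4} from the $\Delta_{i,j}$ to $\Phi,\tilde{\Phi}$ gives $P(x,\lambda)\to I$ as $|\lambda|\to\infty$ in the sectors $\Gamma_{k}$, uniformly in $x$, so the entire bounded matrix $P$ is constant and equal to $I$. Then $\Phi\equiv\tilde{\Phi}$, and since $\Phi$ is a fundamental matrix of the system \eqref{sys} one reads off $F+\Lambda=\Phi'\Phi^{-1}=\tilde{\Phi}'\tilde{\Phi}^{-1}=\tilde{F}+\Lambda$, so $F=\tilde{F}$ and, the forms $U,V$ being shared, $\mathcal{L}=\tilde{\mathcal{L}}$; equivalently the residue cancellation yields $N\equiv I$, i.e. $M=\tilde{M}$, after which one may quote the uniqueness of the operator from its full Weyl-Yurko matrix \cite{Bon2,Yur}. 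The hard part is the middle step — the entirety of $P$, equivalently the residue cancellation — from first-subdiagonal data alone: this is where the separation condition is indispensable, as it keeps poles of adjacent columns of $\Phi$ from colliding, and where the multiplicity bookkeeping of Lemma~\ref{entire} must be combined with the symmetry relations \eqref{3.}--\eqref{5.} to dispose of the entries lying two or more diagonals below the diagonal.
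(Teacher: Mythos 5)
Your overall architecture (first force $M\equiv\tilde M$, then pass to the spectral--mappings matrix $\mathcal P=\Phi\tilde\Phi^{-1}$) matches the paper's, but the decisive step is missing. You correctly locate the difficulty in showing that the entries $m_{j,k}$ with $j-k\ge 2$ (equivalently, the poles of $P$) are controlled by the first subdiagonal, and you propose to handle them with the symmetry relations \eqref{3.}--\eqref{5.} of Lemma~\ref{real0} plus an unspecified ``residue computation.'' That tool is insufficient: the relations of Lemma~\ref{real0} are tied to the three model operators with the forms \eqref{UV_simp}, and even there they do not reach all the lower entries --- for $n=4$ they say nothing about $m_{41}$ and give only the single relation $m_{31}=m_{32}m_{21}-m_{42}$ between the two unknown differences $m_{31}-\tilde m_{31}$ and $m_{42}-\tilde m_{42}$; for $n=5$ the entries $m_{41}$, $m_{51}$, $m_{52}$ are untouched. (In the paper those identities serve a different purpose: reducing the several-spectra data to the subdiagonal Weyl functions in Section~\ref{proof0}.) The residue cancellation you defer is exactly the content of Lemma~\ref{Yurko}: near a zero $\lambda_0$ of $\Delta_{kk}$ of multiplicity $\kappa_k$ one has $\Phi_k=\xi_k+\sum_{\nu}c_{\nu k}(\lambda-\lambda_0)^{-\nu}\Phi_{k+1}$ with $\xi_k$ regular, where the $c_{\nu k}$ are the Laurent coefficients of $m_{k+1,k}$ and hence common to both problems. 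Applying $U_j$ shows that the principal part of \emph{every} $m_{j,k}$ at $\lambda_0$ equals $\sum_{\nu}c_{\nu k}(\lambda-\lambda_0)^{-\nu}m_{j,k+1}(\lambda)$, and $m_{j,k+1}$ is regular there by Assumption~\ref{def:G}; a downward induction on the column index combined with the estimates of Proposition~\ref{prof4} and Liouville's theorem then gives $M\equiv\tilde M$ entry by entry, with no appeal to the symmetries. Without this structural lemma (or an equivalent), your argument has a genuine gap. A smaller unjustified step: common zeros of $\Delta_{k+1,k}$ and $\Delta_{k,k}$ are not excluded by Assumption~\ref{def:G}, so the poles of $m_{k+1,k}$ need not exhaust the zeros of $\Delta_{k,k}$, and your claim that $\Delta_{k,k}=\tilde\Delta_{k,k}$ follows from $m_{k+1,k}=\tilde m_{k+1,k}$ does not go through as stated (fortunately it is not needed).

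Your endgame is also not correct for these operators: $M\equiv\tilde M$ does \emph{not} imply $P(x,\lambda)\to I$, nor $\Phi\equiv\tilde\Phi$. The matrix $\mathcal P(x)$ is unit lower-triangular, but its strictly lower entries are in general nonzero; in the fourth-order case $\mathcal P_{42}(x)=\tilde\tau_2(x)-\tau_2(x)=cx$ may be any linear function, because the associated matrix $F$ is built from antiderivatives that are determined only up to polynomial terms. The correct conclusion (Proposition~\ref{prop:P}) is that $\mathcal P$ is independent of $\lambda$ and satisfies $\mathcal P'+\mathcal P\tilde F=F\mathcal P$ with $\mathcal P(0)=I$, from which one reads off $p=\tilde p$ and $q=\tilde q$ entrywise, as the paper does in its three-case analysis. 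Your fallback --- quoting uniqueness by the full Weyl--Yurko matrix from \cite{Bon2} once $M\equiv\tilde M$ is established --- is sound and essentially what the paper does, so this second defect is reparable; the first is not without importing Lemma~\ref{Yurko}.
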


The proof of Theorem~\ref{mfun} is based on auxiliary lemmas. The following lemma is proved similarly to Lemma 2.5.1 from \cite{Yur}.

\begin{lemma}\label{Yurko}
Let $\mathcal L$ and $\tilde{\mathcal L}$ belong to $\mathcal{W}$. Suppose that $\lambda_{0}$ is a zero of $\Delta_{kk}$ of multiplicity $\kappa_k\geq 1$ for a certain $k$\;$(1\leq k\leq n-1)$. Then, in a neighbourhood of the point $\lambda=\lambda_0$, we have the representation
\begin{equation}\label{xi}
	\Phi_k(x, \lambda)=\xi_k(x, \lambda)+\sum_{\nu=1}^{\kappa_k} \frac{c_{\nu k}}{\left(\lambda-\lambda_0\right)^\nu} \Phi_{k+1}(x, \lambda),
\end{equation}
where the function $\xi_k(x, \lambda)$ is regular at $\lambda=\lambda_0$.
\end{lemma}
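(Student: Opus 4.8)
The plan is to work entirely in the basis $\{C_j(x,\lambda)\}_{j=1}^n$, where all quantities are entire in $\lambda$, and thereby avoid any delicate analysis of the boundary value problem at the eigenvalue $\lambda_0$. Writing the Weyl solutions through the Weyl-Yurko matrix as in \eqref{relM}, we have $\Phi_k=\sum_{j=1}^n M_{jk}C_j$ and $\Phi_{k+1}=\sum_{j=1}^n M_{j,k+1}C_j$, where $M$ is unit lower-triangular with nontrivial entries $M_{jk}=-\Delta_{jk}/\Delta_{kk}$. I first record two facts. The poles of the $k$-th column of $M$ at $\lambda_0$ have order at most $\kappa_k$, so $\Phi_k$ has a pole of order at most $\kappa_k$ there. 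Moreover, since $\lambda_0$ is a zero of $\Delta_{kk}$, Assumption~\ref{def:G} forces $\Delta_{k+1,k+1}(\lambda_0)\ne 0$; as the poles of the $(k+1)$-st column of $M$ lie among the zeros of $\Delta_{k+1,k+1}$, the function $\Phi_{k+1}(x,\lambda)$ is regular at $\lambda_0$.

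The candidate for the singular part is a single multiple of $\Phi_{k+1}$. I set $\Psi(x,\lambda):=\Phi_k(x,\lambda)-M_{k+1,k}(\lambda)\,\Phi_{k+1}(x,\lambda)$ and expand it in the $C_j$-basis: the coefficient of $C_j$ in $\Psi$ is $M_{jk}-M_{k+1,k}M_{j,k+1}$. Using lower-triangularity together with $M_{kk}=M_{k+1,k+1}=1$ and $M_{k,k+1}=0$, this coefficient vanishes for $j<k$, equals $1$ for $j=k$, and vanishes for $j=k+1$. Hence $\Psi=C_k+\sum_{j=k+2}^n\big(M_{jk}-M_{k+1,k}M_{j,k+1}\big)C_j$, and since each $C_j(x,\lambda)$ is entire in $\lambda$, the regularity of $\Psi$ at $\lambda_0$ reduces to the scalar statement that $M_{jk}-M_{k+1,k}M_{j,k+1}$ is regular at $\lambda_0$ for every $j>k+1$.

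The crux is a determinant identity. In terms of the characteristic functions the coefficient above equals $-\dfrac{\Delta_{jk}\Delta_{k+1,k+1}+\Delta_{k+1,k}\Delta_{j,k+1}}{\Delta_{kk}\Delta_{k+1,k+1}}$, so, because $\Delta_{k+1,k+1}(\lambda_0)\ne 0$, regularity at $\lambda_0$ is equivalent to $\Delta_{kk}$ dividing the numerator $\Delta_{jk}\Delta_{k+1,k+1}+\Delta_{k+1,k}\Delta_{j,k+1}$. I would establish this as a Sylvester/Jacobi-type identity: all the $\Delta_{rs}$ are minors of the matrix of boundary values $[V_s(C_r)]$, and a cofactor computation shows the numerator equals $\Delta_{kk}$ times another such minor, hence is divisible by $\Delta_{kk}$ as an entire function. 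The smallest case $n-k=2$ already exhibits the cancellation, the numerator collapsing to $V_n(C_k)\,\Delta_{kk}$. This is the step I expect to be the main obstacle, as it requires careful bookkeeping of the rows, columns, and signs in the general minor identity; everything else is routine.

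Granting the identity, $\Psi$ is regular at $\lambda_0$. Since $M_{k+1,k}=-\Delta_{k+1,k}/\Delta_{kk}$ has a pole of order at most $\kappa_k$ there, I split it near $\lambda_0$ into its principal and regular parts, $M_{k+1,k}(\lambda)=\sum_{\nu=1}^{\kappa_k}\dfrac{c_{\nu k}}{(\lambda-\lambda_0)^{\nu}}+r(\lambda)$ with $r$ regular, which defines the constants $c_{\nu k}$. Then
\[
\Phi_k=\Psi+M_{k+1,k}\Phi_{k+1}=\big(\Psi+r\,\Phi_{k+1}\big)+\sum_{\nu=1}^{\kappa_k}\frac{c_{\nu k}}{(\lambda-\lambda_0)^{\nu}}\,\Phi_{k+1},
\]
and putting $\xi_k:=\Psi+r\,\Phi_{k+1}$, which is regular at $\lambda_0$ because $\Psi$, $r$, and $\Phi_{k+1}$ all are, yields exactly the representation \eqref{xi}.
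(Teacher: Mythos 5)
Your proof is correct, and it is worth noting that the paper itself gives no proof of this lemma: it simply states that the result ``is proved similarly to Lemma 2.5.1 from \cite{Yur}''. Your argument is therefore a genuinely self-contained alternative to that citation. The one step you flag as the main obstacle --- that $\Delta_{j,k}\Delta_{k+1,k+1}+\Delta_{k+1,k}\Delta_{j,k+1}$ is divisible by $\Delta_{k,k}$ as an entire function --- does hold. Writing $B$ for the $(n-k)\times(n-k+1)$ matrix $[V_s(C_r)]$ with rows $s=\overline{k+1,n}$ and columns $r=\overline{k,n}$, each $\Delta_{j,k}$ is (up to the sign $(-1)^{j-k-1}$ coming from reordering columns) the maximal minor of $B$ with column $j$ deleted, and each $\Delta_{j,k+1}$ is likewise a sign times the minor of $B$ with row $k+1$ and columns $k,j$ deleted. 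The three-term Pl\"ucker relation
\begin{equation*}
[\hat a]\,[\hat b\hat c]_i-[\hat b]\,[\hat a\hat c]_i+[\hat c]\,[\hat a\hat b]_i=0,
\end{equation*}
applied with $i$ the row $V_{k+1}$ and $(a,b,c)=(k,k+1,j)$, gives exactly that your numerator equals $\pm\Delta_{k,k}$ times the entire minor $[\hat k\,\widehat{k+1}\,\hat j]$-type factor; your explicit computation for $n-k=2$ (numerator $=V_n(C_k)\Delta_{k,k}$) is the base instance of this. The remaining ingredients --- regularity of $\Phi_{k+1}$ and of the $(k{+}1)$-st column of $M$ at $\lambda_0$ from Assumption~\ref{def:G}, the pole order of $M_{k+1,k}$ being at most $\kappa_k$, and the final splitting of $M_{k+1,k}$ into principal and regular parts --- are all handled correctly; for the edge case $k=n-1$ note that $\Delta_{n,n}\equiv 1$ and the sum over $j\ge k+2$ is empty, so $\Psi=C_{n-1}$ is trivially entire. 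Compared with Yurko's original argument, which expands $\Phi_k$ in the locally regular fundamental system $\{C_1,\dots,C_k,\Phi_{k+1},\dots,\Phi_n\}$ and examines the coefficients, your route stays entirely in the entire basis $\{C_j\}$ at the cost of proving the minor identity explicitly; both are legitimate, and yours has the advantage of making the divisibility mechanism completely explicit.
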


\begin{lemma} \label{lem:eqM}
Under the conditions of Theorem~\ref{mfun}, $M(\lambda) \equiv \tilde M(\lambda)$.
\end{lemma}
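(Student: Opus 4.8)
The plan is to show that the full Weyl--Yurko matrix $M(\lambda)$ is determined by its subdiagonal entries $m_{k+1,k}(\lambda)$, $k = \overline{1,n-1}$, under Assumption~\ref{def:G}. Since $M(\lambda)$ is unit lower-triangular with $M_{j,k} = -\Delta_{j,k}/\Delta_{k,k}$, it suffices to prove that every entry $m_{j,k}(\lambda) = M_{j,k}(\lambda)$ with $j > k$ agrees with $\tilde m_{j,k}(\lambda)$. The key structural tool is Lemma~\ref{Yurko}: near any pole $\lambda_0$ (a zero of $\Delta_{k,k}$ of multiplicity $\kappa_k$) the Weyl solution $\Phi_k$ has a principal part proportional to $\Phi_{k+1}$. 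Iterating this relation expresses each $\Phi_k$ through the ``deeper'' Weyl solutions $\Phi_{k+1}, \Phi_{k+2}, \dots$, and the coefficients of this expansion are exactly the subdiagonal Weyl functions together with their derivatives at the poles.

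First I would fix $k$ and analyze the poles of the $k$-th column of $M(\lambda)$, which by Subsection~\ref{sec:WY} coincide with the zeros of $\Delta_{k,k}(\lambda)$. By the hypothesis $m_{k+1,k} = \tilde m_{k+1,k}$, the functions $\Delta_{k+1,k}/\Delta_{k,k}$ and their tilde-counterparts agree; combined with Assumption~\ref{def:G}, which guarantees that $\Delta_{k,k}$ and $\Delta_{k+1,k+1}$ share no zeros, this forces $\Delta_{k,k}$ and $\tilde\Delta_{k,k}$ to have the same zeros with the same multiplicities, so that $\Phi_k$ and $\tilde\Phi_k$ have poles at the same points. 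The separation condition is what lets me treat each column's poles independently and identify the coefficients $c_{\nu k}$ in \eqref{xi} as the Laurent coefficients of $m_{k+1,k}$, which are equal to those of $\tilde m_{k+1,k}$.

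Next I would propagate the equality down the columns. Writing the Laurent expansion \eqref{xi} at each pole and matching principal parts, Lemma~\ref{entire} applies with $f = \Delta_{k,k}$, $g = \Delta_{j,k}$, $h = \Delta_{k,k}$ (and their tildes): the agreement of the Weyl functions $m_{k+1,k}$ at the poles, together with their derivatives up to order $\kappa_k - 1$, makes the cross-difference $\Delta_{k,k}\tilde\Delta_{j,k} - \Delta_{j,k}\tilde\Delta_{k,k}$ divisible by $\Delta_{k,k}$, hence entire. A growth estimate from Proposition~\ref{prof4}, controlling $\Delta_{j,k}$ and $\Delta_{k,k}$ by $\rho^{a_{ij}}e^{\Re(\rho s_j)}$ on the sectors $\mathcal G_\delta$, shows this entire function is bounded and therefore, by Liouville, constant; comparing asymptotics at infinity pins the constant to zero. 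This yields $\Delta_{j,k}/\Delta_{k,k} = \tilde\Delta_{j,k}/\tilde\Delta_{k,k}$, i.e. $m_{j,k} = \tilde m_{j,k}$ for all $j > k$, so $M \equiv \tilde M$.

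The main obstacle I expect is the bookkeeping at \emph{multiple} poles: the recursion \eqref{xi} only controls the principal part of $\Phi_k$ in terms of $\Phi_{k+1}$, so to reach an entry $m_{j,k}$ with $j > k+1$ I must iterate the substitution and carefully track how higher-order pole data of the intermediate Weyl functions feed into the coefficients, matching derivatives up to the correct multiplicity $\kappa_k - 1$. Verifying that the hypotheses of Lemma~\ref{entire} hold at every pole simultaneously — in particular that $f(\lambda_m)\tilde f(\lambda_m) \neq 0$, which here follows from Assumption~\ref{def:G} — is the delicate point, and the separation condition is precisely what makes this induction close.
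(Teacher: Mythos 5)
Your opening and closing paragraphs describe the paper's strategy correctly: use Lemma~\ref{Yurko} to identify the coefficients $c_{\nu k}$ in \eqref{xi} with the Laurent coefficients of $m_{k+1,k}$ at each pole, conclude $c_{\nu k}=\tilde c_{\nu k}$, and then propagate the equality to the remaining entries of $M(\lambda)$. However, the mechanism you propose for the propagation is broken. You invoke Lemma~\ref{entire} with $f=\Delta_{k,k}$, $g=\Delta_{j,k}$ and $h=\Delta_{k,k}$, but that lemma requires $f(\lambda_m)\tilde f(\lambda_m)\neq 0$ at the zeros $\lambda_m$ of $h$, which is impossible when $f=h$. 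Moreover, its hypothesis \eqref{fracfg} asks for equality of derivatives of $g/f=-m_{j,k}$ at $\lambda_m$, and $m_{j,k}$ has a pole there, so the condition cannot even be formulated. Finally, even granting that the quotient were entire, it equals $\tilde\Delta_{k,k}\,(m_{j,k}-\tilde m_{j,k})$ up to sign, which by Proposition~\ref{prof4} grows like $e^{\Re(\rho s_k)}$ on $\mathcal G_\delta$; it is not bounded, so Liouville's theorem gives nothing. (Lemma~\ref{entire} is the tool for the proofs of Theorems~\ref{thm:main} and \ref{thm:main2}, where one divides by a \emph{different} characteristic function whose zeros are separated from those of $f$; it plays no role in the present lemma.) A secondary point: your claim that $m_{k+1,k}=\tilde m_{k+1,k}$ forces $\Delta_{k,k}$ and $\tilde\Delta_{k,k}$ to have identical zeros with identical multiplicities would need the nonvanishing of the leading Laurent coefficient $c_{\kappa_k,k}$, which is neither proved in this paper nor needed for the argument.

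The paper's actual argument fixes the row index $k+1$ and performs a downward induction on the column index $i$. Applying the form $U_{k+1}$ to \eqref{xi} written for $\Phi_i$ gives, near any zero $\lambda_0$ of $\Delta_{i,i}$, the identity $m_{k+1,i}(\lambda)=U_{k+1}(\xi_i(x,\lambda))+\sum_{\nu=1}^{\kappa_i}c_{\nu i}(\lambda-\lambda_0)^{-\nu}m_{k+1,i+1}(\lambda)$. By Assumption~\ref{def:G}, $m_{k+1,i+1}$ is regular at $\lambda_0$; since $c_{\nu i}=\tilde c_{\nu i}$ (from the subdiagonal hypothesis applied at column $i$) and $m_{k+1,i+1}=\tilde m_{k+1,i+1}$ by the induction hypothesis, the principal parts of $m_{k+1,i}$ and $\tilde m_{k+1,i}$ at $\lambda_0$ coincide, so their difference is entire. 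Proposition~\ref{prof4} then bounds this difference by $C_\delta|\rho|^{i-k-1}$ on $\mathcal G_\delta$, which tends to zero, and Liouville's theorem — applied to the difference of Weyl functions, which is genuinely bounded, not to a cross-difference of characteristic functions — forces it to vanish identically. You correctly anticipated that the bookkeeping at multiple poles and the role of the separation condition are the delicate points, but the tool you chose to handle them is the wrong one.
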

	
\begin{proof}
Suppose that, for a certain $k \in \{ 1, \dots, n-1\}$, a number $\lambda_0$ is a zero of $\Delta_{k,k}(\lambda)$ of multiplicity $\kappa_k$, that is, $\lambda_0 \in \Lambda_{k k}$. Then it follows from Assumption \ref{def:G} that $\lambda_0 \notin \Lambda_{k+1, k+1}$, i.e. $\Delta_{k+1, k+1}\left(\lambda_0\right) \neq 0$, and, by Lemma \ref{Yurko}, we have the representation \eqref{xi} in a neighbourhood of $\lambda=\lambda_0$. Applying the linear form $U_{k+1}$ to the both sides of \eqref{xi} and taking into account the relations $U_{k+1}\left(\Phi_k(x, \lambda)\right)=m_{k+1, k}(\lambda)$, $U_{k+1}\left(\Phi_{k+1}(x, \lambda)\right)=1$, we obtain
		$$
		m_{k+1, k}(\lambda)=U_{k+1}\left(\xi_k(x, \lambda)\right)+\sum_{\nu=1}^{\kappa_k} \frac{c_{\nu k}}{\left(\lambda-\lambda_0\right)^\nu} .
		$$
		
		Hence
		$$
		c_{\nu k}=\left[m_{k+1, k}(\lambda)\right]_{\mid \lambda=\lambda_0}^{\langle-\nu\rangle} .
		$$
		
		By virtue of $m_{k+1,k}(\lambda)=\tilde{m}_{k+1,k}(\lambda)$ for  $k=\overline{1,n-1}$, we get $c_{\nu k}=\tilde{c}_{\nu k}$. 
		
		Applying the boundary condition $U_{k+1}$ to both sides of \eqref{xi}, the following is obtained:
			\begin{equation}
				m_{k+1,i}(\lambda)=U_{k+1}(\Phi_{i}(x, \lambda))=U_{k+1}(\xi_{i}(x, \lambda))+\sum_{\nu=1}^{\kappa_i}\frac{c_{\nu i}}{\left(\lambda-\lambda_0\right)^\nu} m_{k+1,i+1}(\lambda),
		\end{equation}
	where $\kappa_i$ is the multiple of the zero $\lambda_{0}$ of $\Delta_{ii}$.
		
	Suppose that our assertion $m_{k+1,i}(\lambda)=\tilde{m}_{k+1,i}(\lambda)$ is proved for $i=j,  \;(2\leq j \leq m)$. For $i=j-1$, we have
		\begin{equation}
			m_{k+1,j\!-\!1}(\lambda)\!=\!U_{k+1}(\Phi_{j\!-\!1}(x, \lambda))\!=\!U_{k+1}(\xi_{j-1}(x, \lambda))\!\!+\!\!\!\sum_{\nu=1}^{\kappa_{j\!-\!1}}\!\frac{c_{\nu, j-1}}{\left(\lambda-\lambda_0\right)^\nu} m_{k+1,j}(\lambda).
		\end{equation}
	Therefore, the function $m_{k+1,j-1}(\lambda)-\tilde{m}_{k+1,j-1}(\lambda)$ is regular at $\lambda_{0}$, which is the zero of $\Delta_{j-1,j-1}(\lambda)$. Hence it is entire in $\lambda$. Proposition \eqref{prof4} yields the
	estimate
	\begin{equation}
		|(m_{k+1,j-1}-\tilde{m}_{k+1,j-1})(\lambda)|\leq C_{\delta}|\rho|^{j-k-2},\quad\rho\in\mathcal{G}_{\delta},\quad|\rho|\rightarrow\infty.
	\end{equation}
By Liouville’s Theorem, $m_{k+1,j-1}(\lambda)=\tilde{m}_{k+1,j-1}(\lambda)$. By induction, we have $M(\lambda)=\tilde{M}(\lambda)$.
\end{proof}	

The proof arguments of Theorem~2 from \cite{Bon2} imply the following proposition.

\begin{proposition} \label{prop:P}
Suppose that $M(\lambda) = \tilde M(\lambda)$. Then, the matrix function $\mathcal P(x, \lambda) := \Phi(x, \lambda) (\tilde \Phi(x, \lambda))^{-1}$ does not depend on $\lambda$, that is, $\mathcal P(x, \lambda) = \mathcal P(x)$. Furthermore, $\mathcal P(x)$ is a unit lower-triangular matrix and	
\begin{equation} \label{relP}
\mathcal{P}^{\prime}(x)+\mathcal{P}(x) \tilde{F}(x)=F(x) \mathcal{P}(x), \quad x \in (0,1), \quad \mathcal{P}(0)=I, 
\end{equation}
where I is the identity matrix.
\end{proposition}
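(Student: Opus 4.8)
The plan is to follow Yurko's method of spectral mappings, realizing $\mathcal{P}$ through two complementary representations. First I would record that, by \eqref{relM} and its analogue for $\tilde{\mathcal L}$, one has $\Phi = CM$ and $\tilde\Phi = \tilde C\tilde M$, so the hypothesis $M = \tilde M$ yields the crucial identity
\[
\mathcal{P}(x,\lambda) = \Phi(x,\lambda)\,\tilde\Phi(x,\lambda)^{-1} = C(x,\lambda)\,\tilde C(x,\lambda)^{-1}.
\]
The right-hand side exhibits $\mathcal{P}$ as an \emph{entire} function of $\lambda$ for each fixed $x$: the columns $\vec C_k$ and $\vec{\tilde C}_k$ solve the first-order systems $Y' = (F+\Lambda)Y$ and $Y' = (\tilde F+\Lambda)Y$ with the same $\lambda$-independent initial data, since the forms \eqref{UV_simp} force $C_k^{[j]}(0) = \delta_{j,n-k} = \tilde C_k^{[j]}(0)$; moreover $\Tr(F+\Lambda)=\Tr(\tilde F+\Lambda)=0$, so Liouville's formula gives $\det C \equiv \det\tilde C \equiv \pm 1$, whence $\tilde C^{-1}$ has entire entries.

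The second representation $\mathcal{P} = \Phi\tilde\Phi^{-1}$ is what I would use to bound $\mathcal{P}$ and to read off its triangular shape. Invoking the asymptotics of the Weyl solutions $\Phi_k^{[s]}(x,\lambda)$ in the sectors $\Gamma_k$ (established in \cite{Bon6,Bon2} in the spirit of Proposition~\ref{prof4}), together with the ordering $\Re(\rho\omega_1)<\dots<\Re(\rho\omega_n)$, one obtains entrywise estimates of the form $\mathcal{P}_{jk}(x,\lambda) = \delta_{jk} + o(1)$ for $j\le k$ and $\mathcal{P}_{jk}(x,\lambda) = O(1)$ for $j>k$, uniformly in $\mathcal{G}_\delta$ as $|\rho|\to\infty$. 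Combining these bounds with entirety and Liouville's theorem entry by entry, the super-diagonal entries ($j<k$) vanish identically, the diagonal entries equal $1$, and the sub-diagonal entries are constant in $\lambda$. Hence $\mathcal{P}(x,\lambda)=\mathcal{P}(x)$ is independent of $\lambda$ and is unit lower-triangular.

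It then remains to derive \eqref{relP}. Rewriting the $\lambda$-independence as $\Phi(x,\lambda)=\mathcal{P}(x)\tilde\Phi(x,\lambda)$ and differentiating in $x$, I would substitute $\Phi'=(F+\Lambda)\Phi$ and $\tilde\Phi'=(\tilde F+\Lambda)\tilde\Phi$ and cancel the right factor $\tilde\Phi$ — legitimate since $\det\tilde\Phi = \det\tilde C\,\det\tilde M = \pm 1$, so $\tilde\Phi$ is invertible off the discrete pole set, whose complement is dense — to obtain $\mathcal{P}' + \mathcal{P}(\tilde F+\Lambda) = (F+\Lambda)\mathcal{P}$. Because $\mathcal{P}$ is unit lower-triangular and the only nonzero entry of $\Lambda$ sits at position $(n,1)$, a direct check gives $\mathcal{P}\Lambda = \Lambda\mathcal{P} = \Lambda$, so the $\Lambda$-terms cancel and \eqref{relP} follows. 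Finally $\mathcal{P}(0)=C(0,\lambda)\tilde C(0,\lambda)^{-1}=I$, since both initial matrices coincide with the fixed matrix determined by \eqref{UV_simp} independently of the coefficients.

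The main obstacle is the asymptotic analysis underlying the entrywise estimates of $\mathcal{P}=\Phi\tilde\Phi^{-1}$. One must control the Weyl solutions together with all their quasi-derivatives sector by sector, and in particular handle the boundary rays between adjacent $\Gamma_k$, where the ordering of the $\Re(\rho\omega_j)$ degenerates, by using the uniform bounds valid only in the regions $\mathcal{G}_\delta$. Reconciling these sectorwise estimates into a single global bound on the entire function $\mathcal{P}$ — so that Liouville applies simultaneously to all entries — is the delicate point, and is precisely where the detailed estimates of \cite{Bon2,Bon6} are needed.
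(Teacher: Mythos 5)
Your argument is correct and is essentially the same route the paper takes: the paper establishes this proposition by invoking the proof of Theorem~2 in \cite{Bon2}, which is precisely the spectral-mappings argument you outline (the identity $\mathcal{P}=C\tilde C^{-1}$ giving entirety in $\lambda$, sectorial estimates plus Liouville-type reasoning giving $\lambda$-independence and the unit lower-triangular form, then differentiation of $\Phi=\mathcal{P}\tilde\Phi$ with cancellation of the $\Lambda$-terms). One minor correction: for $j>k$ the sectorial asymptotics give $\mathcal{P}_{jk}(x,\lambda)=O(\rho^{\,j-k})$ rather than $O(1)$, so since $\rho^{\,j-k}=\lambda^{(j-k)/n}$ with $(j-k)/n<1$ you must use the generalized Liouville theorem for entire functions of sublinear growth in $\lambda$ to conclude that these entries are constant.
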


\begin{proof}[Proof of Theorem~\ref{mfun}]
	
Applying Lemma~\ref{lem:eqM} and Proposition~\ref{prop:P}, we arrive at the relations \eqref{relP}, which yield the following conclusions:

(1) The third-order case. $\mathcal{P}_{31}=p-\tilde{p}=0$. Hence $\tilde{p}=p.$

(2) The fourth-order case. $\mathcal{P}_{32}=\tilde{\tau}_{1}-\tau_{1}$, $\mathcal{P}_{32}(0)=0,$ $\mathcal{P}_{42}=\tilde{\tau}_{2}-\tau_{2}$, $\mathcal{P}_{42}(0)=0,$ $\mathcal{P}_{32}'=\mathcal{P}_{42}''=0$. Hence $\tilde{\tau}_{1}={\tau}_{1},\tilde{\tau}_{2}={\tau}_{2}+cx.$

(3) The fifth-order case. $\mathcal{P}_{42}=\tilde{\sigma}_{0}-{\sigma}_{0}=0,$ $\mathcal{P}_{41}=\tilde{\sigma}_{1}-\sigma_{1}$, $\mathcal{P}_{41}(0)=0,$
$\mathcal{P}_{41}'=0$. Hence $\tilde{\sigma}_{0}={\sigma}_{0},\tilde{\sigma}_{1}=\sigma_{1}.$

Thus, in all the three cases, the distribution coefficients of the problems $\mathcal L$ and $\tilde{\mathcal L}$ coincide.
\end{proof}

\begin{remark}
In the fifth-order case, the antiderivatives $\sigma_0$ and $\sigma_1$ are uniquely recovered from the Weyl functions in $L_2(0,1)$. However, in the fourth-order case, the function $\tau_2$ is recovered uniquely up to a term $cx$, where $c$ is an arbitrary constant, while $\tau_1$ is specified uniquely.
\end{remark}

\section{Proofs of the uniqueness theorems}\label{proof0}
In this section, we prove the uniqueness theorems for inverse problems of recovering the differential operators of orders $n=3,4,5$ from several spectra.


\begin{proof}[Proof of Theorem~\ref{thm:main1}]
		By Lemma \ref{S}, we have $m_{21}(\lambda)=\tilde{m}_{21}(\lambda)$. Using \eqref{3.}, we get $m_{32}(\lambda)={m}_{21}(-\lambda)$. Finally, by Theorem \ref{mfun},  we conclude that $\mathcal{L}=\tilde{\mathcal{L}}.$
	\end{proof}

\begin{proof}[Proof of Theorem~\ref{thm:main}]
	By Lemma~\ref{S} and the definition of the Weyl-Yurko matrix, we have
\begin{equation} \label{eqm}	
m_{42}(\lambda)=\tilde{m}_{42}(\lambda), \quad m_{32}(\lambda)=\tilde{m}_{32}(\lambda). 
\end{equation}

We also have $m_{43}(\lambda)=m_{21}(\lambda)$ via (\ref{4.}), so we only need to prove $m_{21}(\lambda)=\tilde{m}_{21}(\lambda)$.
	
Expanding the Laurent series at $\lambda=\lambda_{s2}$ for $m_{j2}(\lambda)$ and $m_{k1}(\lambda)$, where $j=3,4,\;k=2,3$, we obtain the following relations:
\begin{align}\label{4..}
		m_{j2}(\lambda)& =\sum_{i=-\kappa_{m}}^{\infty}m_{j2\langle i \rangle}(\lambda_{s2})(\lambda-\lambda_{s2})^i,\quad j=3,4, \\
\label{41..}
m_{k1}(\lambda)& =\sum_{i=0}^{\infty}m_{k1\langle i \rangle}(\lambda_{s2})(\lambda-\lambda_{s2})^i,\quad k=2,3.
\end{align}
where $\{\lambda_{s2}\}_{s\geq1}$ are the zeros of the function $\Delta_{22}$ with the corresponding multiplicities $\{\kappa_{s}\}_{s\geq1}$.
	
Substituting \eqref{4..}, \eqref{41..} into \eqref{4.}, we get the following relations:
\begin{equation*}
	m_{42\langle i\rangle}(\lambda_{s2})=\sum_{l=-\kappa_{m}}^{i}m_{32\langle l\rangle}(\lambda_{s2})m_{21\langle i-l\rangle}(\lambda_{s2}),\;\;i=\overline{-\kappa_{s},-1},
\end{equation*}

Comparing the coefficients and using \eqref{eqm}, we obtain
	\begin{equation*}
		m_{21}^{(j)}(\lambda_{s2})=\tilde{ m}_{21}^{(j)}(\lambda_{s2}),\quad j=\overline{0, \kappa_{m}-1}.
	\end{equation*}
	
	Additionally, we construct the function
	$$G_{1}(\lambda)={\Delta}_{11}(\lambda)\tilde{\Delta}_{21}(\lambda)-\tilde{\Delta}_{11}(\lambda){\Delta}_{21}(\lambda).$$
	By Lemma~\ref{entire}, we  see that $K_{1}(\lambda):=\dfrac{G_1(\lambda)}{\Delta_{22}(\lambda)}$ is entire in $\lambda$.
	
	By Proposition~\ref{prof4}, we obtain the following asymptotic estimates  on the ray $\arg\rho=\frac{\pi}{4}$:
	\begin{equation*}
		{\Delta}_{11}(\lambda)=O(e^\rho),\quad
		{\Delta}_{21}(\lambda)=O(\rho^{-1}e^\rho),\quad
		|{\Delta}_{22}(\lambda)|\geq C_{\delta}e^{\Im \rho+\Re \rho}.
	\end{equation*}
	
	Consequently, we obtain
	\begin{equation*}
		|K_{1}(\lambda)|\leq|\rho|^{-1}|e^{\Re\rho-\Im\rho}|=|\rho|^{-1}\rightarrow 0,\;\;\;\arg\rho=\frac{\pi}{4},\;|\rho|\rightarrow\infty.
	\end{equation*}
	Hence
	\begin{equation*}
		|K_{1}(\lambda)|\rightarrow 0,\;\;\;\arg\lambda=\pi,\;\lambda\rightarrow-\infty.
	\end{equation*}
	The entire functions ${\Delta}_{21}(\lambda)$, ${\Delta}_{22}(\lambda)$ and ${\Delta}_{11}(\lambda)$ are order of $\frac{1}{4}$ and so does the function $K_{1}(\lambda)$. Applying Phragmen-Lindel\"of's theorem \cite{BFY},  we can get $K_{1}(\lambda) \equiv 0,\;\lambda\in \mathbb{C}$. Hence $G_1(\lambda) \equiv 0,\;\lambda\in \mathbb{C}$ and so $m_{21}(\lambda)=\tilde{m}_{21}(\lambda)$.
	By Theorem \ref{mfun},  we have $\mathcal{L}=\tilde{\mathcal{L}}.$	
\end{proof}

\begin{proof}[Proof of Theorem~\ref{thm:main2}]
	By Lemma \ref{S}, we have $m_{43}(\lambda)=\tilde{m}_{43}(\lambda)$ and $m_{53}(\lambda)=\tilde{m}_{53}(\lambda)$. Using \eqref{5.}, we also conclude that $m_{43}(\lambda)=\tilde{m}_{32}(-\lambda)$. So, we only need to prove $m_{21}(\lambda)=\tilde{m}_{21}(\lambda)$.
	
	Denote $\{\lambda_{s2}\}_{s\geq1}$, $\{\lambda_{s3}\}_{s\geq1}$ are the zeros of the function $\Delta_{22}$, $\Delta_{33}$ with multiplicity $\{{\kappa_{s2}}\}_{s\geq1}$ and $\{{\kappa_{s3}}\}_{s\geq1}$ respectively. By \eqref{5.}, we have  $\{\lambda_{s2}\}_{s\geq1}=\{-\lambda_{s3}\}_{s\geq1}$ and  $\{{\kappa_{s2}}\}_{s\geq1}=\{{\kappa_{s3}}\}_{s\geq1}$.
	
	Similarly, we expand the Laurent series at $\lambda=\lambda_{s2}$ for \eqref{5.} and compare the coefficients, this yields the following relations:
	\begin{equation*}
		m_{53\langle i\rangle}(\lambda_{s2})=\sum_{l=-\kappa_{s2}}^{i}m_{43\langle l\rangle}(\lambda_{s2})m_{21\langle i-l\rangle}(\lambda_{s2}),\;\;i=\overline{-\kappa_{s2},-1},
	\end{equation*}
Hence, we have 
\begin{equation*}\label{equal11}
		m_{21}^{(j)}(\lambda_{s2})=\tilde{ m}_{21}^{(j)}(\lambda_{s2}),\;\;\;j=\overline{0, \kappa_{s2}-1}.
\end{equation*}

	Construct the function
	$$G_{2}(\lambda)={\Delta}_{11}(\lambda)\tilde{\Delta}_{21}(\lambda)-\tilde{\Delta}_{11}(\lambda){\Delta}_{21}(\lambda)$$
	Lemma \ref{entire} implies that $K_{2}(\lambda):=\dfrac{G_{2}(\lambda)}{\Delta_{22}(\lambda)}$ is entire in $\lambda$.
	
	By Proposition \ref{prof4}, we get the following asymptotic estimates on the ray $\rho > 0$:

	\begin{equation*}
		{\Delta}_{11}(\lambda)=O(e^{\rho(\cos\frac{6\pi}{5}+2\cos\frac{2\pi}{5}+1)}),
	\end{equation*}
	\begin{equation*}
		{\Delta}_{21}(\lambda)=O(\rho^{-1}e^{\rho(\cos\frac{6\pi}{5}+2\cos\frac{2\pi}{5}+1)}),
	\end{equation*}
	\begin{equation*}
		|{\Delta}_{22}(\lambda)|\geq C_{\delta}e^{\rho(2\cos\frac{2\pi}{5}+1)}.
	\end{equation*}
	Consequently, we obtain
	\begin{equation*}
		|K_{2}(\lambda)|\leq|\rho|^{-1}e^{\rho(4\cos^{2}\frac{\pi}{5}-2\cos\frac{\pi}{5}-1)}=|\rho|^{-1}\rightarrow 0,\;\;\;\rho > 0,\; \rho \to +\infty.
	\end{equation*}
	Note that
	\begin{equation*}
		|K_{2}(\lambda)|\rightarrow0,\;\;\;\lambda > 0,\;\lambda\rightarrow+\infty.
	\end{equation*}
	The entire functions ${\Delta}_{21}(\lambda)$, ${\Delta}_{22}(\lambda)$ and ${\Delta}_{11}(\lambda)$ are of order $\frac{1}{5}$ and so does the function $K_{2}(\lambda)$. Applying Phragmen-Lindel\"of's theorem \cite{BFY},  we can get $K_{2}(\lambda) \equiv 0,\;\lambda\in \mathbb{C}$. Hence $G_{2}(\lambda) \equiv 0,\;\lambda\in \mathbb{C}$ and so $m_{21}(\lambda)=\tilde{m}_{21}(\lambda)$.
	By Theorem \ref{mfun}, we have $\mathcal{L}=\tilde{\mathcal{L}}.$
	
\end{proof}

In conclusion, it is worth mentioning that the problem investigated in this study pertains to differential operators of third, fourth, and fifth order. Dealing with the orders of greater than $5$ requires a different approach, so this is a topic for future research.

\end{document}